\definecolor{violet}{rgb}{0.0,0.2,0.7}
\definecolor{rouge2}{rgb}{0.8,0.0,0.2}
\newcommand{\R}{\mathbb{R}}
\newcommand{\C}{\mathbb{C}}
\newcommand{\Z}{\mathbb{Z}}
\renewcommand{\d}{\partial}
\newcommand{\db}{\overline\partial}
\newcommand{\ddb}{\partial\overline\partial}
\newcommand{\ep}{\varepsilon}
\renewcommand{\div}{\mathrm{div}}
\newcommand{\Om}{\Omega}
\newcommand{\om}{\omega}
\newcommand{\omte}{\omega_{t,\ep}}
\newcommand{\ome}{\omega_{\ep}}
\newcommand{\Ric}{\mathrm{Ric} \,}
\newcommand{\ud}{|\sigma|^2+1}
\newcommand{\rr}{^{\otimes p}}
\newcommand{\tr}{\mathrm{tr}}
\newcommand{\la}{\langle}
\newcommand{\ra}{\rangle}
\renewcommand{\ge}{\geqslant}
\renewcommand{\le}{\leqslant}
\newcommand{\vp}{\varphi}
\theoremstyle{plain}
\newtheorem{thm}{Theorem}[section]
\newtheorem{prop}[thm]{Proposition}
\newtheorem{propdef}[thm]{Proposition-Definition}
\newtheorem{coro}[thm]{Corollary}
\newtheorem{lemm}[thm]{Lemma}
\newtheorem{bigthm}{Theorem}
\theoremstyle{definition}
\newtheorem{defi}[thm]{Definition}
\newtheorem{rema}[thm]{Remark}
\newtheorem{set}[thm]{Setting}
\numberwithin{equation}{section}
\begin{document}

\title[A Bochner principle and its applications for manifolds in class $\mathcal C$]{A Bochner
principle and its applications to Fujiki class $\mathcal C$ manifolds
with vanishing first Chern class}

\author[I. Biswas]{Indranil Biswas}

\address{School of Mathematics, Tata Institute of Fundamental
Research, Homi Bhabha Road, Mumbai 400005, India}

\email{indranil@math.tifr.res.in}

\author[S. Dumitrescu]{Sorin Dumitrescu}

\address{Universit\'e C\^ote d'Azur, CNRS, LJAD, France}

\email{dumitres@unice.fr}

\author[H. Guenancia]{Henri Guenancia}

\address{Institut de Math\'ematiques de Toulouse; UMR 5219, Universit\'e de Toulouse; CNRS, UPS,
118 route de Narbonne, F-31062 Toulouse Cedex 9, France}

\email{henri.guenancia@math.cnrs.fr}

\subjclass[2010]{53B35, 53C55, 53A55}

\keywords{Holomorphic geometric structure, Fujiki class $\mathcal C$ manifold, holomorphic 
Riemannian metric, Bochner principle, Calabi-Yau manifold.}

\date{}

\begin{abstract}
We prove a Bochner type vanishing theorem for compact complex manifolds $Y$ in Fujiki class
$\mathcal C$, with 
vanishing first Chern class, that admit a cohomology class $\lbrack \alpha \rbrack\,\in\,
H^{1,1}(Y,\,\R)$ which is numerically effective (nef) and has positive self-intersection (meaning 
$\int_Y \alpha^n \,>\, 0$, where $n\,=\,\dim_{\mathbb C} Y$). Using it, we prove that all holomorphic
geometric structures of affine type on such a manifold $Y$ are locally homogeneous on a non-empty
Zariski open subset. Consequently, if the geometric structure is rigid in the sense of Gromov, then the 
fundamental group of $Y$ must be infinite. In the particular case where the geometric 
structure is a holomorphic Riemannian metric, we show that the manifold $Y$ admits a finite 
unramified cover by a complex torus with the property that the pulled
back holomorphic Riemannian metric on the torus is translation invariant.
\end{abstract}

\maketitle

\tableofcontents

\section {Introduction} 

Yau's celebrated theorem \cite{Ya}, proving Calabi's conjecture, endows any compact 
K\"ahler manifold $X$ with vanishing real first Chern class (meaning $c_1(TX)\,=\,0$ in 
$H^2(X, \,\R)$) with a Ricci flat K\"ahler metric. Such metrics constitute an extremely 
useful tool for studying the geometry of these manifolds, known as Calabi-Yau manifolds. 
For example, by the well-known Bochner principle, any holomorphic tensor on $X$ must be 
parallel with respect to any Ricci flat K\"ahler metric \cite{Be}. The study of the holonomy 
of such a Ricci flat K\"ahler metric furnishes, in particular, an elegant proof of the 
Beauville--Bogomolov decomposition theorem that asserts that, up to a finite unramified 
cover, a Calabi-Yau manifold $X$ is biholomorphic to the product of a complex torus 
with a compact complex simply connected manifold with trivial first Chern class \cite{Be, 
Bo}.

In the special case where the second real Chern class of the Calabi-Yau manifold $X$ also 
vanishes, any Ricci flat K\"ahler metric on $X$ has vanishing sectional curvature. In that 
case, as a consequence of Bieberbach's theorem, $X$ actually admits a finite unramified 
cover which is a complex torus \cite{Be}. Notice that a compact K\"ahler manifold bearing 
a holomorphic affine connection on its holomorphic tangent bundle has vanishing real Chern 
classes \cite[p.~192--193, Theorem 4]{At}, and hence it admits an unramified cover
by a compact complex torus \cite{IKO}.

Using the Bochner principle, it was proved in \cite{Du} that holomorphic geometric 
structures of affine type (their definition is recalled in the paragraph following 
Definition \ref{hgs}) on any compact K\"ahler manifold $X$ with vanishing first Chern 
class are in fact locally homogeneous. Consequently, if the geometric structure satisfies 
the condition that it is rigid in the sense of Gromov, \cite{DG,Gr}, then $X$ admits a 
finite unramified cover which is a complex torus.

The aim in this paper is to generalize the above mentioned results to the broader context 
of Fujiki class $\mathcal C$ manifolds \cite{Fu, Fu2}. Recall that a compact complex 
manifold $Y$ is in Fujiki class $\mathcal C$ if it is the image of a holomorphic map from a
compact K\"ahler manifold. By an important result of Varouchas \cite{Va}, this is equivalent to the 
assertion that $Y$ admits a surjective holomorphic map from a compact K\"ahler manifold 
such that the map is a bimeromorphism.

Let $Y$ be a compact connected complex manifold, of complex dimension $n$, with trivial first
(real) Chern class. We assume that $Y$
\begin{itemize}
\item lies in Fujiki class $\mathcal C$, and

\item{} admits a numerically effective (nef) cohomology class 
$\lbrack \alpha \rbrack \,\in\, H^{1,1}(Y,\, \R)$ that has 
positive self-intersection, meaning $\int_Y \alpha^n \,>\, 0$ with $\alpha$ being a real closed 
smooth $(1,1)$--form representing the cohomology class $\lbrack \alpha \rbrack$.
\end{itemize}
By a result of Demailly and P\u{a}un in 
\cite{DP}, the combination of the two conditions above is equivalent to the condition that $Y$ admits a cohomology class $ \lbrack 
\alpha \rbrack \,\in\, H^{1,1}_{\partial \overline\partial}(Y,\,\R)$ which is both nef and big (see Section~\ref{positivity}, e.g. Remark~\ref{fujiki} and Corollary~\ref{cor1}). 

Under the above assumptions we prove the following Bochner type theorem for holomorphic tensors on $Y$ 
(see Corollary~\ref{determination}).

\begin{bigthm}
\label{thmA}
Let $Y$ be a complex compact manifold with trivial first Chern class. Assume that $Y$ admits a nef and big cohomology class $[\alpha]\in H^{1,1}_{\partial \overline\partial}(Y,\,\R)$. 

\noindent 
Then, there exists a closed, positive $(1,1)$-current $\om \in [\alpha]$ which induces a genuine Ricci-flat K\"ahler metric on a non-empty Zariski open subset $\Omega \subset Y$. 

\noindent
Furthermore, given any global holomorphic tensor $\tau \in H^0(Y,\,{T_Y}^{\otimes p}\otimes {T_Y^*}^{\otimes q})$ with $p,q\, \geq \, 0$, the restriction $\tau|_{\Om}$ is parallel with respect to $\om|_{\Om}$.
\end{bigthm}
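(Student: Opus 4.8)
The plan is to produce $\om$ by solving a degenerate complex Monge--Amp\`ere equation, to carry out the classical Bochner computation on the locus $\Om$ where $\om$ is an honest metric, and then to promote the resulting differential inequality to a global vanishing statement despite the non-compactness of $\Om$.

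First I would construct $\om$. Since $Y$ lies in Fujiki class $\mathcal C$ the $\ddb$-lemma holds on it, so $c_1(Y)=0$ furnishes a smooth volume form $\mu>0$ with vanishing Ricci form $\Ric(\mu)=0$. After normalising $\int_Y\mu=\int_Y\a^n$, I would solve the Monge--Amp\`ere equation $\la\om^n\ra=\mu$ for a closed positive current $\om=\a+i\ddb\vp$ in the big and nef class $[\a]$, the solution being supplied by the pluripotential theory of Boucksom--Eyssidieux--Guedj--Zeriahi in big cohomology classes (pulling back to a compact K\"ahler model $X\to Y$ by Varouchas if one prefers to work in the K\"ahler category). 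The potential $\vp$ has minimal singularities and is in fact bounded, and on the ample locus $\Om:=\mathrm{Amp}([\a])$ --- a non-empty Zariski open subset because $[\a]$ is big --- interior estimates turn $\om$ into a smooth K\"ahler metric. On $\Om$ the equation becomes $\om^n=\mu$; since $\Ric(\om)$ and $\Ric(\mu)$ differ by $-i\ddb\log(\om^n/\mu)=0$, the metric $\om|_{\Om}$ is Ricci-flat.

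Next I would carry out the Bochner computation on $\Om$. Equip $E:=T_Y^{\otimes p}\otimes (T_Y^*)^{\otimes q}$ with the Hermitian metric and Chern connection $\nabla$ induced by $\om$. As $\tau$ is holomorphic, $\nabla^{0,1}\tau=\db\tau=0$, so $\nabla\tau=\nabla^{1,0}\tau$, and the Weitzenb\"ock identity for a holomorphic section gives
\[
\Delta_{\om}|\tau|^2_{\om}=|\nabla\tau|^2_{\om}-\la i\Lambda_{\om}\Theta(E)\,\tau,\tau\ra .
\]
Here the mean curvature $i\Lambda_{\om}\Theta(E)$ is built out of the mean curvature $i\Lambda_{\om}\Theta(T_Y)$ acting on each tensor slot, and for the Chern connection of a K\"ahler metric this equals the Ricci endomorphism. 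Since $\Ric(\om)=0$ on $\Om$, the curvature term disappears and
\[
\Delta_{\om}|\tau|^2_{\om}=|\nabla\tau|^2_{\om}\ge 0 ,
\]
so $|\tau|^2_{\om}$ is $\om$-subharmonic on $\Om$.

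To conclude I must show $\int_{\Om}|\nabla\tau|^2_{\om}\,\om^n=0$, for this forces $\nabla\tau\equiv 0$ and hence $\tau|_{\Om}$ parallel. Writing $\Delta_{\om}|\tau|^2_{\om}\,\om^n$ as a positive multiple of $i\ddb|\tau|^2_{\om}\wedge\om^{n-1}$ and integrating by parts against cut-off functions $\chi_\ep$ that equal $1$ off an $\ep$-neighbourhood of $Z:=Y\setminus\Om$ and vanish near $Z$, one gets, since $\om$ is closed,
\[
\int_{\Om}\chi_\ep\,|\nabla\tau|^2_{\om}\,\om^n=c\int_{\Om}|\tau|^2_{\om}\,i\ddb\chi_\ep\wedge\om^{n-1} .
\]
By monotone convergence the left-hand side tends to $\int_{\Om}|\nabla\tau|^2_{\om}\,\om^n$, so it suffices that the right-hand side tend to $0$. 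This last point is the crux and the main obstacle, because $|\tau|^2_{\om}$ will in general blow up along $Z$ (the inverse metric enters through the covariant indices of $\tau$) exactly where $\om^{n-1}$ degenerates, so the two effects must be balanced. I would take the $\chi_\ep$ adapted to a quasi-plurisubharmonic function $\psi$ cutting out $Z$ --- for instance $\psi=\log|s|^2$ for a holomorphic section $s$ vanishing on $Z$ --- estimate $i\ddb\chi_\ep$ through $i\,d\psi\wedge d^c\psi$ and $i\ddb\psi$, and play off the finiteness of the total mass $\int_Y\a^n<\infty$ against the bounded-potential and minimal-singularity properties of $\om$ to drive the error to $0$. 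A technically safer variant is to approximate $\om$ on the K\"ahler model by smooth metrics $\om_\delta$ with controlled Ricci curvature, coming from a regularised Monge--Amp\`ere equation, apply the compact Bochner identity to each $\om_\delta$, and pass to the limit with uniform estimates; in either route the genuinely delicate ingredient is the same, namely the uniform control of $|\tau|^2$ and of its first covariant derivative near the non-ample locus $Z$.
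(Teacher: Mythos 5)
Your construction of $\om$ and your pointwise computation on $\Om$ agree in substance with the paper: the paper solves $-\Ric(T)=[E]$ on a K\"ahler modification $f\,:\,X\to Y$ via \cite{BEGZ} (which is exactly your equation $\la \om^n\ra=\mu$ pulled back, since $f^*\mu$ acquires the degenerate density $|s_E|^2_h\,dV$), pushes the solution down to $Y$, and identifies the smooth locus using Collins--Tosatti. But the step you yourself call ``the crux'' --- showing the error terms vanish so that $\int_{\Om}|\nabla\tau|^2_{\om}\,\om^n=0$ --- is the actual content of the theorem, and neither of your two routes closes it. The cutoff route is unjustified: minimal singularities and boundedness of the potential give no control whatsoever on $|\tau|^2_{\om}$ near $Z=Y\setminus\Om$ (the norm of a fixed holomorphic tensor in a degenerating metric can blow up at any rate), so there is no reason the terms $\int |\tau|^2_{\om}\, dd^c\chi_\ep\wedge \om^{n-1}$ tend to $0$. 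Your ``safer variant'' (regularize, apply the compact Bochner identity, pass to the limit) is indeed the paper's strategy, but you misidentify the delicate ingredient: no ``uniform control of $|\tau|^2$ and of its first covariant derivative near $Z$'' is needed or used. The paper's mechanism is to run the identity for $dd^c\log(1+|\sigma|^2)$ rather than for $|\sigma|^2$, with $\sigma=f^*\tau$, on the \emph{compact} model $X$ against the metrics $\omte$ solving $\Ric(\omte)=t\om_X-\theta_\ep$: integration over $X$ kills the exact term with no boundary contributions, the ratio $|\sigma|^2/(1+|\sigma|^2)\le 1$ is tautologically bounded, and the curvature error splits into a piece handled by dominated convergence (Lemma~\ref{lem2}, resting on Yau's estimate $\omte\le C_t\,\om_X$) plus a purely cohomological piece equal to $t^{n-1}(E_i\cdot[\om_X]^{n-1})\to 0$, because the exceptional divisor is orthogonal to $f^*[\alpha]$. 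The conclusion $D'\sigma=0$ on the good locus then follows from Fatou's lemma alone, i.e.\ from lower semicontinuity, with no uniform bounds near $Z$.

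There is a second gap. Your argument treats mixed tensors $T_Y^{\otimes p}\otimes {T_Y^*}^{\otimes q}$ on an equal footing, but any global regularization scheme must live on the K\"ahler model $X$ (the manifold $Y$ is in general non-K\"ahler, so the approximating Monge--Amp\`ere equations cannot be solved on $Y$ itself), and a tensor with contravariant indices does \emph{not} pull back under the bimeromorphic map $f$. This is precisely why the paper proves the Bochner principle (Theorem~\ref{Bochner principle}) only for covariant tensors ${T_Y^*}^{\otimes p}$, and then deduces the general case (Corollary~\ref{determination}) by passing to the finite unramified cover $\widehat Y\to Y$ on which $K_{\widehat Y}$ is holomorphically trivial (Tosatti's theorem, Remark~\ref{chern}), using $T_{\widehat Y}\simeq\bigwedge^{n-1}T^*_{\widehat Y}$ to convert contravariant slots into covariant ones, together with the functoriality $T_{\widehat X}=\widehat\pi^*T$ of the singular Ricci-flat metric under finite covers. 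Both this reduction and the $\log(1+|\sigma|^2)$ normalization are absent from your proposal, and without them the argument does not go through.
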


The existence of the singular Ricci-flat metric $\om$ is essentially due to \cite{BEGZ}; see Section~\ref{RFK}. 
The statement about parallelism is new, although the techniques involved in its proof are mostly borrowed from 
\cite{Gue} (see also \cite{CP1}). Using this Bochner principle, we deduce the following result
(see Theorem~\ref{ouvert dense}).

\begin{bigthm}
\label{thmB}
Under the assumptions of Theorem~\ref{thmA}, the following holds:
\begin{enumerate}
\item There exists Zariski open subset $\emptyset \neq \Omega\subset Y$ such that any holomorphic geometric structure of affine type on $Y$ is locally homogeneous on $\Om$.

\item If $Y$ admits a rigid holomorphic geometric structure of affine type, then the fundamental group of $Y$ is infinite.
\end{enumerate} 
\end{bigthm}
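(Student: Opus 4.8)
The plan is to derive both statements from the Bochner principle of Theorem~\ref{thmA}, following the blueprint of the K\"ahler case in \cite{Du}. A holomorphic geometric structure of affine type is, by definition, encoded by a holomorphic tensor: a global section $\phi$ of a finite direct sum $W=\bigoplus_i T_Y^{\otimes a_i}\otimes (T_Y^*)^{\otimes b_i}$, higher-order structures being reduced to this tensorial form by prolongation. Applying Theorem~\ref{thmA} to each component, the restriction $\phi|_\Om$ is parallel for the Levi-Civita connection $\nabla$ of the genuine Ricci-flat K\"ahler metric $\om$ on the Zariski open set $\Om$. Shrinking $\Om$ to the (still non-empty, Zariski open) locus where the holonomy of $\om$ has locally constant type --- note that this locus depends only on $\om$, not on $\phi$ --- I may assume that over each contractible $U\subset\Om$ the local de Rham--Beauville--Bogomolov decomposition holds, splitting $(U,\om)$ holomorphically and metrically into a flat factor together with irreducible factors of holonomy $SU$ and $Sp$.

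On such a product $\phi$ decomposes accordingly, and being parallel it is holonomy-invariant; by the first fundamental theorem of invariant theory its contribution on each factor is a polynomial in the Kronecker tensor and in the parallel holomorphic volume form (on an $SU$-factor), resp.\ the parallel holomorphic symplectic form (on an $Sp$-factor), the flat factor contributing only constant-coefficient tensors. The key point is then soft: the local biholomorphisms preserving these canonical forms --- translations on the flat factor, holomorphic volume-preserving maps on an $SU$-factor, symplectomorphisms on an $Sp$-factor --- act transitively, by the holomorphic Moser and Darboux theorems, and they automatically preserve $\phi$. Taking the product pseudogroup, the local holomorphic automorphisms of $\phi$ act transitively on $U$; this proves (1), with a single $\Om$ valid for every affine structure.

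For (2), suppose $\phi$ is rigid in the sense of Gromov and, aiming for a contradiction, that $\pi_1(Y)$ is finite. The universal cover is then a finite unramified cover $\hat Y$ which is simply connected, compact, in Fujiki class $\mathcal C$, with $c_1=0$ and a pulled-back nef and big class, so Theorem~\ref{thmA} and part~(1) apply to it. Local homogeneity produces nonzero local Killing fields of the pulled-back structure $\hat\phi$; since $\hat\phi$ is rigid and real-analytic and $\hat Y$ is simply connected, these extend to nonzero global holomorphic vector fields by the extension theorem for rigid structures \cite{Gr,DG}. But $H^0(\hat Y,T_{\hat Y})=0$ --- the Calabi--Yau analogue of $h^{1,0}=0$: by Theorem~\ref{thmA} any holomorphic vector field would be $\om$-parallel, hence of constant nonzero norm, and would correspond via $\om$ to a nonzero holomorphic $1$-form, contradicting $b_1(\hat Y)=0$. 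This contradiction shows that $\pi_1(Y)$ is infinite.

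The main obstacle is the local homogeneity step (1): upgrading the purely pointwise data --- that $\phi$ is parallel, hence of constant holonomy-type --- into a genuinely transitive pseudogroup of local holomorphic symmetries. This is exactly where the genuine, rather than merely singular, nature of $\om$ on $\Om$ is used, through the local de Rham splitting and the transitivity of the model pseudogroups on each irreducible factor; the remaining care lies in carrying out the Killing-field extension of (2) and in securing the vanishing $H^0(\hat Y,T_{\hat Y})=0$ in the non-K\"ahler class $\mathcal C$ setting.
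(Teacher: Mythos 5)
Your proof of part (1) rests on the opening claim that a holomorphic geometric structure of affine type ``is encoded by a holomorphic tensor\dots higher-order structures being reduced to this tensorial form by prolongation.'' This is the crux, and it is not true as stated. Affine type means the target $Z$ of the $D^k$-equivariant map $\phi\colon R^k(Y)\to Z$ is affine; for $k\ge 2$, after an equivariant embedding $Z\hookrightarrow V$, the section $\phi$ lives in a bundle associated to the \emph{higher} frame bundle $R^k(Y)$ and a $D^k$-module that does not factor through $\mathrm{GL}(n,\C)$, so it is not a tensor. The basic example is a holomorphic affine connection: affine type of order two, not a section of any tensor bundle (only \emph{differences} of connections are tensors, and there is no holomorphic reference connection at hand, the Levi-Civita connection of $\om$ not being holomorphic). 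Consequently Theorem~\ref{thmA}/Corollary~\ref{determination} cannot be applied to such a $\phi$ at all, and neither the parallelism nor the invariant-theory step of your argument makes sense for it. Your route (local de Rham splitting of the Ricci-flat metric into flat, $SU$ and $Sp$ factors, first fundamental theorem, then transitivity of the volume-preserving/symplectic pseudogroups via holomorphic Moser and Darboux) is a reasonable sketch of local homogeneity for \emph{parallel tensors}, i.e.\ essentially for order-one structures; but the passage from the tensor statement to arbitrary affine-type structures is precisely the hard content, which the paper obtains by quoting Lemme 3.2 of \cite{Du} (whose proof goes through Gromov's theory of local Killing fields). Note that the only input the paper needs from the Bochner principle is the injectivity of the evaluation maps in Corollary~\ref{determination}: any holomorphic tensor vanishing at a point of $\Om$ vanishes identically.

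In part (2), the vanishing $H^0(\widehat{Y},T_{\widehat{Y}})=0$, which you yourself flag as ``remaining care,'' does not follow from your argument and is the missing step. A global holomorphic vector field is parallel only on $\Om$, so contracting with $\om$ (or with $g(\cdot,\cdot)$) produces a holomorphic $1$-form only on $\Om$; since $\om$ degenerates along $\widehat{Y}\smallsetminus\Om$, that form has no reason to extend holomorphically to $\widehat{Y}$, and hence no contradiction with $b_1(\widehat{Y})=0$ is obtained. The paper avoids this issue entirely and argues in the opposite direction: the globally extended Killing fields $X_1,\dots,X_n$ span $T_z\Om$ at some point, a trivializing section $vol$ of $K_{\widehat{Y}}$ exists (Remark~\ref{chern}), and the holomorphic function $vol(X_1,\dots,X_n)$ is constant and nonzero at points of $\Om$, hence nowhere zero; thus $\widehat{Y}$ is parallelizable, and Wang's theorem \cite{Wa} exhibits $\widehat{Y}$ as a quotient of a complex Lie group by a cocompact lattice, contradicting simple connectedness. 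To complete your proof you would need either to adopt this parallelizability--Wang argument, or to supply an independent proof of $H^0(\widehat{Y},T_{\widehat{Y}})=0$ in the Fujiki class $\mathcal C$ setting, which your contraction argument does not give.
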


For the particular case of a holomorphic Riemannian 
metric, we deduce using the above mentioned Bochner principle the following result (see Theorem~\ref{holo metric}).
 
\begin{bigthm}
\label{thmC}
Under the assumptions of Theorem~\ref{thmA}, assume additionally that $Y$ admits a holomorphic
Riemannian metric $g$. 

\noindent
Then, there is a finite unramified cover $\gamma\, :\, {\mathbb T}\, 
\longrightarrow\, Y$, where $\mathbb T$ is a complex torus, such that the pulled back holomorphic Riemannian 
metric $\gamma^*g$ on the torus ${\mathbb T}$ is translation invariant.
\end{bigthm}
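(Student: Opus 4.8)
The plan is to promote the parallelism supplied by Theorem~\ref{thmA} into genuine flatness of $g$, and then to recover the torus through a developing-map/Bieberbach argument. Recall that a holomorphic Riemannian metric is, by definition, an everywhere non-degenerate $g\in H^0(Y,\,S^2T_Y^*)$; it therefore admits a unique torsion-free \emph{holomorphic} connection $\nabla^g$ on $T_Y$, its Levi-Civita connection, determined by the Koszul formula. Being holomorphic, the curvature of $\nabla^g$ is a $(2,0)$-form with values in $\mathrm{End}(T_Y)$, and $g$ is $\nabla^g$-parallel by construction.

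First I would apply Theorem~\ref{thmA} to $g$, viewed as a holomorphic tensor with $p=0$ and $q=2$: on the Zariski open set $\Om$ carrying the genuine Ricci-flat K\"ahler metric $\om$, the restriction $g|_\Om$ is parallel for the Chern connection $\nabla^\om$. Since $\om$ is K\"ahler, $\nabla^\om$ preserves the complex structure, induces a $\C$-linear connection on $T_Y|_\Om$, and is torsion-free in the holomorphic sense (the K\"ahler condition is precisely the symmetry of its Christoffel symbols). Thus on $\Om$ the induced connection $\nabla^\om$ is a torsion-free $\C$-linear connection with $\nabla^\om g=0$. The uniqueness clause of the Koszul formula, which uses only torsion-freeness and compatibility with the non-degenerate $g$, then forces $\nabla^g=\nabla^\om$ on $\Om$.

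Next I would read off flatness. Because the two connections coincide on $\Om$, their curvatures agree there; but the curvature of $\nabla^g$ is of type $(2,0)$ while the Chern curvature of $\nabla^\om$ is of type $(1,1)$, so the common curvature is simultaneously of both types and hence vanishes on $\Om$. The curvature $R$ of the \emph{globally defined} holomorphic connection $\nabla^g$ is a holomorphic section of $\bigwedge^2 T_Y^*\otimes \mathrm{End}(T_Y)$, and it vanishes on the dense Zariski open $\Om$; by the identity principle, $R\equiv 0$ on all of $Y$. Hence $\nabla^g$ is a flat torsion-free holomorphic affine connection, i.e.\ $g$ is a \emph{flat} holomorphic Riemannian metric (and, incidentally, $\om$ is a flat K\"ahler metric on $\Om$).

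Finally, I would exploit this flat holomorphic $(O(n,\C)\ltimes\C^n,\,\C^n)$-structure. The flat torsion-free $\nabla^g$ yields a developing map $\mathrm{dev}\colon \widetilde Y\to\C^n$ and a holonomy morphism $\rho\colon\pi_1(Y)\to \mathrm{Aff}(\C^n)=GL(n,\C)\ltimes\C^n$; since $g$ is $\nabla^g$-parallel, $\mathrm{dev}^*g$ is the constant standard quadratic form on $\C^n$, so $\rho$ takes values in $O(n,\C)\ltimes\C^n$, and the parallel holomorphic volume form of $g$ makes the structure unimodular. Granting completeness of $\mathrm{dev}$, it is a biholomorphism $\widetilde Y\cong\C^n$, whence $\pi_1(Y)$ identifies with a discrete cocompact torsion-free subgroup $\Gamma\subset O(n,\C)\ltimes\C^n$; a Bieberbach-type theorem then provides a finite-index subgroup $\Lambda\subset\Gamma$ of translations forming a lattice in $\C^n$, giving the finite unramified cover $\gamma\colon\mathbb{T}=\C^n/\Lambda\to Y$ on which $\gamma^*g=\mathrm{dev}^*g$ is the constant, hence translation-invariant, metric. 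The infinite fundamental group guaranteed by Theorem~\ref{thmB} is consistent with, and indeed necessary for, this conclusion. \textbf{The main obstacle} is the completeness of the developing map of this compact flat holomorphic Riemannian manifold: unlike the real Riemannian case there is no Hopf--Rinow theorem, so I expect to establish it from compactness together with unimodularity (the parallel holomorphic volume form), following the holomorphic analogue of the completeness results for compact flat pseudo-Riemannian manifolds.
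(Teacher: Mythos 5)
Your first half is correct, and it is a genuinely different (and arguably slicker) route to what the paper proves as Lemma~\ref{lemc}: where the paper invokes de Rham's local splitting theorem and rules out irreducible holonomy factors by showing they would have to preserve the eigenspace decomposition of the real part of $g$, you identify $\nabla^g$ with the K\"ahler (Chern) connection of $\om$ on $\Om$ via Koszul uniqueness and then kill the common curvature by the $(2,0)$-versus-$(1,1)$ type clash. Both arguments yield the same package --- $\nabla^g$ flat, $\om$ flat on $\Om$, and $\om$ parallel for $\nabla^g$ --- and your propagation of flatness from $\Om$ to all of $Y$ by holomorphy of the curvature of $\nabla^g$ is also how the paper proceeds.

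The endgame, however, has a genuine gap, and it is worse than the completeness issue you flag. First, completeness of a compact manifold with a flat torsion-free connection and parallel volume is the Markus conjecture, open in general; Carri\`ere's theorem covers only Lorentzian signature, and there is no known analogue for holomorphic Riemannian metrics (whose real parts have signature $(n,n)$). Second, and fatally: even granting completeness, there is \emph{no} Bieberbach theorem for cocompact discrete subgroups of $O(n,\C)\ltimes\C^n$, because Bieberbach's argument requires the linear part to lie in a compact group. A concrete counterexample to your final step: the complex Heisenberg group embeds in $O(3,\C)\ltimes\C^3$ as a simply transitive group of holomorphic isometries of the flat metric (the complexification of Nomizu's flat left-invariant Lorentzian metric on the real Heisenberg group), so the Iwasawa manifold is a compact, \emph{complete}, flat holomorphic Riemannian manifold whose fundamental group is nilpotent and not virtually abelian --- it admits no finite cover by a torus. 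Since your argument retains nothing of the hypotheses beyond ``compact complex manifold with a flat holomorphic Riemannian metric,'' it cannot prove the theorem: the Fujiki/nef-and-big input must re-enter after flatness is established, and in your proposal it never does (you use it only to produce $\om$, then discard $\om$).

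What the paper keeps, and what you should splice in, is precisely the K\"ahler form. Since $\nabla^g$ is flat on $Y$ and $\om|_\Om$ is $\nabla^g$-parallel (in your setup this is automatic, as $\nabla^g=\nabla^\om$ on $\Om$), and since $\pi_1(\Om)\longrightarrow\pi_1(Y)$ is surjective because $\Om$ is a non-empty Zariski open subset, the parallel section $\om$ of $\Omega^{1,1}_\Om$ is invariant under the full monodromy of $\pi_1(Y)$ and therefore extends to a $\nabla^g$-parallel section $\widehat\om$ of $\Omega^{1,1}_Y$; parallel transport preserves non-degeneracy and positivity, so $\widehat\om$ is a flat K\"ahler metric on all of the compact manifold $Y$. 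This forces the linear holonomy into a compact (unitary) group, and the classical Riemannian Bieberbach theorem, as used in \cite{Be}, \cite{Bo}, then yields the finite unramified cover by a complex torus on which $g$ pulls back to a translation-invariant metric. With that extension step inserted after your flatness argument, your proof closes up.
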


The Zariski open subset $\Omega$ involved in Theorem~\ref{thmB} can be chosen to contain the smooth locus of the singular Ricci-flat metric $\om$ from Theorem~\ref{thmA}. We would actually conjecture that we could choose $\Omega=Y$. This was proved to be true in \cite{BD} for Moishezon manifolds (meaning manifolds that are bimeromorphic to some 
complex projective manifold \cite{Mo}) using deep positivity properties proved recently 
in \cite{CP2} for tensor powers of the cotangent bundle of \textit{projective} manifolds with pseudoeffective canonical class (see also the expository article \cite{Cl}). 
However it is still not known whether this holds in the set-up of K\"ahler manifolds. It should also 
be mentioned that it was proved in \cite{BD} that a compact simply connected manifold in 
Fujiki class $\mathcal C$ does not admit any holomorphic affine connection on its 
holomorphic tangent bundle. A compact complex manifold in Fujiki class 
$\mathcal C$ bearing a holomorphic affine connection has in fact trivial real Chern classes (its 
proof is identical to the proof in the K\"ahler case \cite{At}). We would conjecture that 
a compact complex manifold in Fujiki class $\mathcal C$ with trivial Chern classes (in the 
real cohomology) admits a finite unramified  cover which is  a complex torus.

\section{Fujiki class $\mathcal C$ manifolds and Bochner principle}

\subsection{Positivity property of $(1,1)$-classes.}
\label{positivity}
Let $X$ be a compact complex manifold of dimension $n$. We set $d^c\,:=
\,\frac{\sqrt{-1}}{2\pi}(\db-\d)$ the real operator so that $dd^c\,=\,\frac{\sqrt{-1}}{\pi}\d\db$. We shall denote by
$H^{1,1}_{\partial \overline\partial}(X)$ the $\ddb$--cohomology, in other words,
$$H^{1,1}_{\partial \overline\partial}(X)\,=\,
\{\alpha \,\in \,\mathcal C^{\infty}(X, \Omega_{X,\mathbb C}^{1,1})\,\mid\, d\alpha\,=\,0\} /
\{\ddb u\,\mid\, u\in \mathcal C^{\infty}(X,\C)\}\, .$$
Moreover, set
$$H^{1,1}_{\partial \overline\partial}(X,\, {\mathbb R})\,:=\, H^{1,1}_{\partial \overline\partial}(X)
\cap H^2(X,\, {\mathbb R}).$$

\begin{defi}
Let $X$ be a compact complex manifold of dimension $n$, and let $[\alpha]\,\in \,
H^{1,1}_{\partial \overline\partial}(X,\, {\mathbb R})$ be a cohomology class represented by a
smooth, closed $(1,1)$-form $\alpha$. We recall the standard terminology:
\begin{enumerate}
\item $[\alpha]$ is \textit{nef} if for any $\ep>0$, there exists a smooth representative
$\om_{\ep}\,\in\, [\alpha]$ such that $\omega_{\ep}\,\ge\, -\ep \om_X$, where $\om_X$ is some
fixed (independent of $\ep$) hermitian metric on $X$.

\item $[\alpha]$ has \textit{positive self-intersection} if $\int_X \alpha^n\,>\,0$. 

\item $[\alpha]$ is \textit{big} if there exists a K\"ahler current $T\,\in\, [\alpha]$, i.e., if
there exists a closed $(1,1)$-current $T\,=\,\alpha+dd^c u\,\in \,[\alpha]$ such that $T\,\ge\, \om_X$ in the sense of
currents, where $u\in L^1(X)$ and $\om_X$ is some hermitian metric on $X$.
\end{enumerate}
\end{defi}

If $X$ is a compact K\"ahler manifold or more generally a Fujiki manifold (see Definition~\ref{fuj}), then the $\partial \overline\partial$--cohomology 
coincides with the usual $\overline\partial$--cohomology (also called Dolbeault cohomology). 
Moreover, Demailly and P\u{a}un proved the following fundamental theorem which is the key 
result towards their numerical characterization of the K\"ahler cone:

\begin{thm}[{\cite[p.~1259, Theorem 2.12]{DP}}]\label{DP}
Let $X$ be a compact K\"ahler manifold, and let $[\alpha]\,\in\, H^{1,1}(X,\,\R)$ be a nef
class with positive self-intersection. Then, $[\alpha]$ is a big class. 
\end{thm}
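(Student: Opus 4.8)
The plan is to exhibit a genuine Kähler current in the class $[\alpha]$, i.e. a closed positive $(1,1)$-current $T = \alpha + dd^c u \in [\alpha]$ with $T \ge \delta\om$ for some constant $\delta > 0$ and a fixed Kähler metric $\om$ on $X$; this is exactly what it means for $[\alpha]$ to be big. Nefness already provides, for each $\ep > 0$, an honest Kähler form $\alpha + \ep\om + dd^c\vp_\ep$ in $[\alpha] + \ep[\om]$, so the entire difficulty lies in preserving strict positivity as $\ep \to 0$, where a priori one only retains a closed positive current carrying no lower bound whatsoever. The device that defeats this degeneration is \emph{mass concentration}: I would use the hypothesis $\int_X\alpha^n > 0$ to force a definite, $\ep$-independent amount of Monge--Amp\`ere mass into an arbitrarily small neighbourhood of one chosen point, and then read off a strictly positive Lelong number for the limiting current.

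Concretely, fix $x_0 \in X$ and set $\alpha_\ep := \alpha + \ep\om$, a Kähler class with $\int_X\alpha_\ep^n = \int_X\alpha^n + O(\ep) > 0$ for small $\ep$. I would apply Yau's theorem to solve the family of Monge--Amp\`ere equations
\begin{equation*}
(\alpha_\ep + dd^c\vp_\ep)^n \,=\, c_\ep\,\mu_\ep,
\end{equation*}
where $c_\ep := \int_X\alpha_\ep^n$ and $\mu_\ep$ is a smooth probability measure placing a fixed fraction of its mass inside a shrinking ball $B(x_0,r_\ep)$. The analytic core is to obtain a priori bounds on the normalized potentials $\vp_\ep$ (with $\sup_X\vp_\ep = 0$) that are uniform in $\ep$ and survive passage to the limit. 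Compactness of normalized $\alpha_\ep$-plurisubharmonic functions then produces, along a subsequence, a limit $\vp$ with $T := \alpha + dd^c\vp \ge 0$, so that $T \in [\alpha]$ is a closed positive current; the concentration of mass forces a lower bound $\nu(T,x_0) \ge \delta_0 > 0$ on the Lelong number at $x_0$, with $\delta_0$ controlled below in terms of $\left(\int_X\alpha^n\right)^{1/n}$.

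It remains to upgrade $T$, which carries only an isolated singularity at $x_0$, to a Kähler current positive everywhere. Assuming $n \ge 2$ (the case $n = 1$ being immediate), blow up $X$ at $x_0$, say $\pi \colon \widehat X \to X$ with exceptional divisor $E$, and fix a Kähler metric $\widehat\om$ on $\widehat X$; write $[E]$ for the current of integration along $E$ and $\{E\}$ for its class. The positive Lelong number gives $\pi^*T \ge \nu(T,x_0)\,[E]$ as currents, so $\pi^*[\alpha] - \nu(T,x_0)\{E\}$ is pseudoeffective, while $\pi^*[\alpha]$ is nef with $\int_{\widehat X}(\pi^*\alpha)^n = \int_X\alpha^n > 0$. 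Combining Demailly's regularization theorem (which replaces a closed positive current by a cohomologous one with analytic singularities, at a loss of positivity controlled by Lelong numbers) with the continuity of the volume along the nef class $\pi^*[\alpha]$, one finds a small $t > 0$ and a Kähler current $S$ in $\pi^*[\alpha] - t\{E\}$ with $S \ge \delta\widehat\om$. Since $\pi^*\om \le C\widehat\om$ for some $C > 0$, the pushforward satisfies $\pi_*S \ge \delta C^{-1}\om$, is closed and positive, and lies in the class $\pi_*\bigl(\pi^*[\alpha] - t\{E\}\bigr) = [\alpha]$ (because $\pi_*\{E\} = 0$); thus $\pi_*S$ is the desired Kähler current.

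The hard part is unquestionably the mass concentration step: producing, uniformly in $\ep$, Monge--Amp\`ere solutions whose potentials degenerate at $x_0$ by a definite amount rests on delicate pluripotential-theoretic estimates (capacities and $L^\infty$ bounds) for degenerate Monge--Amp\`ere equations, and this is the genuine technical heart of the Demailly--P\u{a}un argument. By contrast, once a current with a strictly positive Lelong number is available, the regularization-and-descent step is comparatively formal.
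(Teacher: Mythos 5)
A preliminary remark: the paper does not prove Theorem~\ref{DP} at all --- it is imported as a black box from \cite{DP} --- so your proposal has to be measured against Demailly and P\u{a}un's original argument. Your general frame (Yau's theorem, mass concentration, Lelong numbers, blow-up) is indeed theirs in spirit, and you correctly locate the analytic difficulty; but the specific route you take cannot be completed. The decisive flaw is that concentrating mass at a \emph{single} point $x_0$, and retaining only the positivity of the Lelong number $\nu(T,x_0)$ of a limit current, keeps information that is too weak to imply bigness, whatever endgame one tries. Indeed, a nef, non-big class can perfectly well contain positive currents with a positive Lelong number at any prescribed point: on $X=E\times \P^1$ ($E$ an elliptic curve), take $[\alpha]$ to be the fiber class $\{F\}$ of the projection to $\P^1$ (nef, with $\int_X F^2=0$, hence not big by \cite[Lemma~4.2]{Bou02}), and $T=\delta[F_{x_0}]+(1-\delta)[F_y]$, where $F_{x_0}$ is the fiber through $x_0$ and $F_y$ any other fiber; then $T\ge 0$, $T\in\{F\}$ and $\nu(T,x_0)=\delta>0$. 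So the hypothesis $\int_X\alpha^n>0$ must re-enter the argument \emph{analytically} after the concentration step, and this is exactly why Demailly and P\u{a}un do not concentrate at a point: they work on the blow-up of $X\times X$ along the diagonal $\Delta$, with the nef class $\mathrm{pr}_1^*[\alpha]+\mathrm{pr}_2^*[\alpha]$, and concentrate the Monge--Amp\`ere mass along the exceptional divisor. The resulting limit current $\Theta$ has Lelong number $\ge\delta$ at \emph{every} point of $\Delta$ simultaneously; after Demailly regularization ($\Theta_c\ge -c\,\om_{X\times X}$, analytic singularities containing $\Delta$), the last step converts ``a pole at every point'' into genuine strict positivity by an averaging --- in essence the fiberwise push-forward $\mathrm{pr}_{1*}\big(\Theta_c\wedge\mathrm{pr}_2^*\om^n\big)$, a current in a multiple of $[\alpha]$ bounded below by $-cC\om$ plus the average over $x\in X$ of log poles located at $x$, and that average is a uniformly positive form. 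Nothing in the one-point picture can play the role of this averaging.

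There is a second, independent gap: your construction of the K\"ahler current $S\in\pi^*[\alpha]-t\{E\}$ is circular. You invoke ``continuity of the volume along the nef class $\pi^*[\alpha]$'', which requires knowing $\mathrm{vol}(\pi^*[\alpha])=\int_X\alpha^n>0$. Since positive volume is equivalent to bigness (\cite[Theorem~4.7]{Bou02}, the very result used in this paper), the assertion that a \emph{transcendental} nef class with positive self-intersection has positive volume \emph{is} Theorem~\ref{DP}; unlike the case of line bundles, where Riemann--Roch or holomorphic Morse inequalities give $\mathrm{vol}(L)\ge L^n$ independently, no such independent input exists for transcendental classes. So this step assumes the conclusion. (A smaller caveat on your first step: weak convergence of potentials does not imply convergence of Monge--Amp\`ere measures, so ``mass trapped in shrinking balls $\Rightarrow$ positive Lelong number of the limit'' itself requires a genuine comparison argument; this is one more reason why \cite{DP} concentrate mass along a divisor, where such comparisons are available, rather than in shrinking balls around a point.)
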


Its converse is true as well, meaning all nef and big classes on a 
compact K\"ahler manifold have positive self-intersection
\cite[p.~1054, Lemma~4.2]{Bou02}. Let us now recall a few basic facts about the behavior of these 
notions under bimeromorphisms.

\begin{lemm}
\label{big}
Let $f\,:\,X\,\longrightarrow\, Y$ be a surjective, bimeromorphic morphism between two compact
complex manifolds. Let $[\alpha]\,\in\, H^{1,1}_{\partial \overline\partial}(Y,\,\R)$ and
$[\beta]\,\in\, H^{1,1}_{\partial \overline\partial}(X,\,\R)$ be cohomology classes.
\begin{enumerate}
\item If $[\beta]$ is big, then so is $f_*[\beta]$. 

\item If $[\alpha]$ is nef and has positive self-intersection, then $f^*[\alpha]$ is
also nef and has positive self-intersection.

\item If $[\alpha]$ is big and $X$ is K\"ahler, then $f^*[\alpha]$ is big.
\end{enumerate}
\end{lemm}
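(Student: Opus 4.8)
The plan is to treat the three assertions separately, leaving (3) for last as it carries the real content. Fix hermitian metrics $\om_X$ on $X$ and $\om_Y$ on $Y$; since $f^*\om_Y$ is a smooth semipositive $(1,1)$-form on the compact manifold $X$, there is a constant $C>0$ with $f^*\om_Y\,\le\,C\,\om_X$. I will use throughout that a proper bimeromorphic morphism has generic degree one, so $\int_X f^*\eta\,=\,\int_Y\eta$ for every smooth $\eta$ of top degree; by the projection formula this gives $f_*f^*\,=\,\mathrm{id}$ on currents. Recall also that $f_*$ and $f^*$ descend to $\ddb$-cohomology, that $f_*$ takes closed positive currents to closed positive currents, and that $f_*$ is monotone.

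For (2), nef-ness pulls back directly: if $\ome\,\in\,[\a]$ is a smooth representative with $\ome\,\ge\,-\ep\,\om_Y$, then $f^*\ome\,\in\, f^*[\a]$ is smooth and $f^*\ome\,\ge\,-\ep\,f^*\om_Y\,\ge\,-\ep C\,\om_X$, so letting $\ep\to 0$ shows $f^*[\a]$ is nef. Positive self-intersection is then the degree-one change of variables $\int_X(f^*\a)^n\,=\,\int_X f^*(\a^n)\,=\,\int_Y\a^n\,>\,0$. For (1), let $T\,=\,\b+dd^c u\,\ge\,\om_X$ be a K\"ahler current in $[\b]$. Then $f^*\om_Y\,\le\,C\,\om_X\,\le\,C\,T$, so $T\,\ge\,C^{-1}f^*\om_Y$; pushing forward and using monotonicity together with $f_*f^*\om_Y\,=\,\om_Y$ yields $f_*T\,\ge\,C^{-1}\om_Y$. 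As $f_*T$ is closed, positive and represents $f_*[\b]$, it is a K\"ahler current, so $f_*[\b]$ is big.

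The main obstacle is (3). Pulling back a K\"ahler current $T\,=\,\a+dd^c u\,\ge\,\om_Y$ gives only $f^*T\,\ge\,f^*\om_Y$, a closed positive current that is a genuine metric off the exceptional locus $E$ of $f$ but degenerates along $E$, and so a priori dominates no metric on all of $X$. To repair this while remaining in $f^*[\a]$, I would first reduce to a convenient $f$: by Hironaka there is a modification $g\,:\,X'\,\to\,X$ with $X'$ compact, such that $f\circ g\,:\,X'\,\to\,Y$ is a finite composition of blow-ups with smooth centres. Since $g_*\big((f\circ g)^*[\a]\big)\,=\,f^*[\a]$ and part (1) propagates bigness under pushforward, it suffices to treat $f\circ g$; thus I may assume $f$ is such a composition. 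Then there is an effective $f$-exceptional divisor $D$ on $X$ and a smooth metric $h$ on $\O_X(D)$ for which the smooth $(1,1)$-form $\om_X'\,:=\,N\,f^*\om_Y-\Theta_h(\O_X(D))$ satisfies $\om_X'\,\ge\,\ep\,\om_X$ for some $\ep>0$ once $N\gg 1$: relative ampleness of $-D$ gives positivity along the fibres contracted by $f$, while $N f^*\om_Y$ dominates transversally. By Poincar\'e--Lelong, $-\Theta_h(\O_X(D))\,=\,dd^c\log|s_D|^2_h-[D]$ for the canonical section $s_D$, whence
\[
N\,f^*T+dd^c\log|s_D|^2_h\;=\;N\big(f^*T-f^*\om_Y\big)+\om_X'+[D]\;\ge\;\ep\,\om_X\,.
\]
The left-hand side is closed, positive, represents $N f^*[\a]$, and dominates the hermitian metric $\ep\,\om_X$, hence is a K\"ahler current; therefore $N f^*[\a]$, and so $f^*[\a]$, is big. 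The crux, where the K\"ahler hypothesis on $X$ (equivalently, that $f$ is a K\"ahler morphism) is genuinely used, is the construction of the relatively ample exceptional divisor $D$; alternatively, one may invoke that bigness is equivalent to positivity of the volume and that the volume is a bimeromorphic invariant under pullback.
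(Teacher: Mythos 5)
Your proofs of parts (1) and (2) are essentially identical to the paper's: push forward a K\"ahler current using monotonicity of $f_*$ and $f_*f^*\om_Y=\om_Y$, pull back nef representatives and use that $f$ has degree one for the self-intersection. For part (3), however, you take a genuinely different route. The paper first replaces the K\"ahler current $T\in[\a]$ by one with analytic singularities (Demailly regularization \cite{De}), pulls it back to obtain a positive current in $f^*[\a]$ which is a K\"ahler metric on a nonempty Zariski open subset of $X$, and then invokes Boucksom's criterion \cite[Theorem~4.7]{Bou02} to conclude bigness --- and it is precisely this last step where the hypothesis that $X$ is K\"ahler is used. You instead dominate $f$ by a composition of blow-ups with smooth centres (Hironaka flattening plus principalization), use part (1) to descend bigness through the dominating map $g$, and on the dominating model correct $N f^*T$ by $dd^c\log|s_D|^2_h$ for an effective exceptional divisor $D$ with $-D$ relatively ample; your displayed identity and the positivity count are correct (note that $f^*T\ge f^*\om_Y$ does hold, since quasi-psh local potentials pull back to quasi-psh functions), so you produce an honest K\"ahler current in $Nf^*[\a]$ by hand. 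What each approach buys: the paper's argument is short but leans on Boucksom's volume theory; yours is self-contained at the level of currents, at the price of invoking Hironaka's domination theorem, and --- contrary to your closing remark --- it never actually uses the K\"ahler hypothesis: the relatively ample exceptional divisor exists for any composition of smooth blow-ups, K\"ahler or not, so your argument proves the stronger statement that $f^*[\a]$ is big for an arbitrary modification of compact complex manifolds. The place where K\"ahlerness is ``genuinely used'' is thus not your construction of $D$ but the paper's appeal to Boucksom's theorem; your alternative suggestion via positivity of the volume is in fact the paper's route, not a shortcut around it.
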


\begin{proof}
Let $\om_Y$ be a hermitian metric on $Y$, and let $\om_X$ be a hermitian metric on $X$ 
such that $\om_X\,\ge\, f^*\om_Y$.

Proof of 1: If $[\beta]$ is big, then there exists a closed positive current $T\,\in\, 
[\beta]$ and also a real number $\ep_0\,>\, 0$, such that $T\,\ge\, \ep_0\, \om_X \,\ge\, 
\ep_0 f^*\om_Y$. Then, $f_*T\in f_*[\beta]$ satisfies the condition $f_*T \,\ge\,\ep_0\, 
\om_Y$, and consequently $f_*[\beta]$ is big.

Proof of 2: If $[\alpha]$ is nef, then for every $\ep\,>\,0$, there exists a smooth form $\ome 
\,\in\, [\alpha]$ such that $\ome \,\ge\, -\ep \, \om_Y$. Therefore,
the pullback $f^*\ome \,\in\, f^*[\alpha]$ satisfies the condition
$$f^*\ome \,\ge\, -\ep \, f^*\om_Y \,\ge\, -\ep \, \om_X\, .$$ From this it follows that
$f^*[\alpha]$ is nef. Moreover, we have $$\int_X (f^*\alpha)^n \,=\, \int_Y\alpha^n\,>\,0\, .$$

Proof of 3: Take $[\alpha]$ to be big. Let $T\,\in\, [\alpha]$ be a K\"ahler current. In view
of Demailly's regularization theorem, we may assume that $T$ has analytic singularities (see
Section~\ref{nk}). Since $T$ can locally be written as $\alpha+dd^c \vp$ with
$\vp$ quasi-plurisubharmonic, we may set the pullback of $T$ to be $f^*T\,=\,
f^*\alpha+dd^c (\vp \circ f)$. Then, $f^*T$ is a positive current lying in the class
$f^*[\alpha]$, and $f^*T$ is a K\"ahler metric on a Zariski open set of
$X$. Now from \cite[p.~1057, Theorem~4.7]{Bou02} it follows that $f^*[\alpha]$ is big.
\end{proof}

\begin{defi}
\label{fuj}
A compact complex manifold $Y$ of dimension $n$ is said to be in Fujiki class $\mathcal 
C$ if there exist a compact K\"ahler manifold $X$ and a surjective meromorphic map
$$
f\,:\, X\,\longrightarrow \, Y
$$
(\cite{Fu, Fu2}, \cite[p.~50, Definition 3.1]{Va}). By \cite[p.~51, Theorem~5]{Va},
for any
$Y$ in Fujiki class $\mathcal C$, the above pair $(X,\, f)$ can be so chosen that the map
$f$ from the compact K\"ahler manifold $X$ is a bimeromorphism. In other words, a
Fujiki class $\mathcal C$ manifold admits a K\"ahler modification.
\end{defi}

\begin{rema}
\label{fujiki}
A manifold $Y$ in Fujiki class $\mathcal C$ admits a K\"ahler current (big class). Indeed, if $f\,:\,X\,
\longrightarrow \, Y$ is a K\"ahler modification, and $\om$ is a K\"ahler form on $X$, then $f_*\om$ is
a K\"ahler current on $Y$ (see proof of Lemma \ref{big} (1)).
In the opposite direction, it was proved in \cite[p.~1250, Theorem 0.7]{DP} that any
complex compact manifold $Y$ admitting a K\"ahler current is necessarily in Fujiki class $\mathcal C$. 
\end{rema}

\begin{coro}\label{cor1}
Let $Y$ be a compact complex manifold in Fujiki class $\mathcal C$. Let $\lbrack \alpha\rbrack
\,\in\, H^{1,1}_{\partial \overline\partial}(Y,\,\R)$ be a nef class. 
Then, $\lbrack \alpha\rbrack$ is big if and only if $[\alpha]$ has positive self-intersection.
\end{coro}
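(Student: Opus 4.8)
The plan is to reduce the statement to the Kähler case by passing to a Kähler modification and exploiting the fact that the self-intersection number is a bimeromorphic invariant. By Definition~\ref{fuj}, since $Y$ lies in Fujiki class $\mathcal C$, there exist a compact Kähler manifold $X$ and a surjective bimeromorphism $f\,:\,X\,\longrightarrow\,Y$. Pulling back and pushing forward along $f$ will transport the notions of nef, big, and positive self-intersection between $X$ and $Y$ by means of Lemma~\ref{big}, while the identity $\int_X (f^*\alpha)^n \,=\, \int_Y \alpha^n$ (recorded already in the proof of Lemma~\ref{big}(2)) ensures that positivity of the self-intersection is faithfully preserved in both directions.

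For the implication that positive self-intersection forces bigness, I would argue as follows. Assume $[\alpha]$ is nef with $\int_Y \alpha^n \,>\, 0$. By Lemma~\ref{big}(2) the pulled-back class $f^*[\alpha]$ is nef and still has positive self-intersection on the compact Kähler manifold $X$. Theorem~\ref{DP} then applies on $X$ and yields that $f^*[\alpha]$ is big. Finally, Lemma~\ref{big}(1) shows that $f_*(f^*[\alpha])$ is big; since $f$ is a bimeromorphism we have $f_* f^*[\alpha]\,=\,[\alpha]$, whence $[\alpha]$ is big, as desired.

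For the reverse implication, assume $[\alpha]$ is nef and big. As $X$ is Kähler, Lemma~\ref{big}(3) gives that $f^*[\alpha]$ is big, and the nef-preservation established in the proof of Lemma~\ref{big}(2) shows that $f^*[\alpha]$ is also nef. On the Kähler manifold $X$ one may then invoke the converse to Theorem~\ref{DP}, namely \cite[p.~1054, Lemma~4.2]{Bou02}, to conclude $\int_X (f^*\alpha)^n \,>\, 0$. Since this integral equals $\int_Y \alpha^n$, the class $[\alpha]$ has positive self-intersection, completing the equivalence.

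The only genuinely delicate point is the compatibility of $f_*$ and $f^*$ on $\partial\overline\partial$-cohomology: one must know that $f_* f^* \,=\, \mathrm{id}$ on $H^{1,1}_{\partial \overline\partial}(Y,\,\R)$ for the bimeromorphism $f$, and that the pullback and pushforward of currents used in Lemma~\ref{big} descend correctly to these cohomology groups. Granting this—which holds because $f$ restricts to an isomorphism over a Zariski-dense open subset and the exceptional locus is negligible for the relevant integrations—the argument is otherwise a purely formal transport of the positivity notions across the modification.
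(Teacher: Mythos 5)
Your proposal is correct and follows essentially the same route as the paper: pull back along a K\"ahler modification, apply Theorem~\ref{DP} (for one direction) and \cite[Lemma~4.2]{Bou02} (for the other) on the K\"ahler model, and transport bigness and self-intersection back via Lemma~\ref{big} together with the identity $\int_X (f^*\alpha)^n = \int_Y \alpha^n$. The only cosmetic difference is that you make explicit the facts $f_*f^*[\alpha]=[\alpha]$ and the nefness of $f^*[\alpha]$ in the converse direction, which the paper uses implicitly.
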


\begin{proof}
Let $f\,:\, X\,\longrightarrow \, Y$ be a K\"ahler modification. 

Assume that $[\alpha]$ has positive self-intersection. Then the pullback
$f^*\lbrack \alpha\rbrack$ is nef and has
positive self-intersection by Lemma~\ref{big} (2). Now Theorem \ref{DP} says that
$f^*\lbrack \alpha\rbrack$ is big. Therefore, Lemma~\ref{big} (1) implies that $[\alpha]$ is big. 

Conversely, assume that $[\alpha]$ is big. By Lemma~\ref{big} (3), the pullback $f^*[\alpha]$ is big as
well and if follows from \cite[p.~1054, Lemma~4.2]{Bou02} that $f^*[\alpha]$ has positive self-intersection. As
$\int_Y \alpha^n\,=\,\int_X (f^*\alpha)^n\,>\,0$, the result is proved.
\end{proof}

\begin{rema}\label{rem1}
Demailly and P\u{a}un conjectured the following (\cite[p.~1250, Conjecture~0.8]{DP}): If a complex compact 
manifold $Z$ possesses a nef cohomology class $\lbrack \alpha\rbrack$ which has positive
self-intersection, then $Z$ lies in the Fujiki class $\mathcal C$. This conjecture
would imply that a nef class $[\alpha]\,\in\, H^{1,1}_{\ddb}(Z,\,\R)$ on a compact complex manifold $Z$ is big
if and only if $[\alpha]$ has positive self-intersection (see Corollary \ref{cor1}).
\end{rema}

\subsection{Non-K\"ahler locus of a cohomology class}\label{nk}

\begin{defi}
Let $X$ be a complex manifold and $U\,\subset\, X$ an open subset.
\begin{enumerate}
\item A plurisubharmonic function (psh for short) $\vp$ on $U$ is said to have analytic singularities 
if there exist holomorphic functions $f_1,\, \ldots, \,f_r \,\in \, \mathcal O_X(U)$, a smooth function 
$\psi \,\in\, \mathcal C^{\infty}(U)$, and $a\,\in\, \R_+$, such that $$\vp\,=\, a 
\log(|f_1|^2+\ldots+|f_r|^2)+\psi\, .$$

\item Let $T$ be a closed, positive $(1,1)$-current on $X$. Then $T$ is said to have analytic 
singularities if it can be expressed, locally, as $T\,=\, dd^c\vp$, where
$\vp$ is a psh function with analytic singularities. The singular set for $T$ is denoted by 
$E_+(T)$; it is a proper Zariski closed subset of $X$.
\end{enumerate}
\end{defi}

It follows from the fundamental regularization theorems of Demailly, \cite{De}, that any big class 
$[\alpha]$ in a compact complex manifold $X$ contains a K\"ahler current $T$ with analytic 
singularities. Note that such a current $T$ is smooth on a non-empty Zariski open subset of $X$, and
$T$ induce a K\"ahler metric on this Zariski open subset. Following \cite{Bou04}, we define:

\begin{defi}\label{def1}
Let $X$ be a compact complex manifold, and let $[\alpha]\,\in\, H^{1,1}_{\partial \overline\partial}
(X,\, {\mathbb R})$ be a big cohomology class. The \textit{non-K\"ahler locus} of $[\alpha]$ is
$$E_{ nK}([\alpha])\,:=\, \bigcap_{T\in [\alpha]} E_{+}(T)\, ,$$
where the intersection is taken over all positive currents with analytic singularities. The
\textit{ample locus} $\mathrm{Amp}([\alpha])$ of $[\alpha]$ is the complement of the non-K\"ahler locus,
meaning
$$\mathrm{Amp}([\alpha])\, :=\, X\smallsetminus E_{nK}([\alpha])\, .$$
\end{defi}

Boucksom proved the following: Given any big class $[\alpha]\,\in\, H^{1,1}_{\partial 
\overline\partial}(X,\, {\mathbb R})$, there exists a positive current $T\,\in\, [\alpha]$ with 
analytic singularities such that $E_+(T)\,=\,E_{nK}([\alpha])$; in particular, $E_{nK}(\alpha)$ is a 
proper Zariski closed subset of $X$ (see \cite[p.~59, Theorem 3.17]{Bou04}).

The following Proposition builds upon a result of Collins and Tosatti \cite[p.~1168, Theorem~1.1]{CT} 
showing that the non-K\"ahler locus of a nef and big class $[\alpha]$ on a Fujiki manifold 
$X$ coincides with its null locus $\mathrm{Null}([\alpha])$ defined as the reunion of all 
irreducible subvarieties $V\,\subseteq\, X$ such that $\int_{V}\alpha^{\dim V}\,=\,0$.

\begin{prop}\label{null}
Let $f\,:\,X\,\longrightarrow\, Y$ be a bimeromorphic morphism between two compact complex manifolds 
belonging to the Fujiki class $\mathcal C$. Let $[\alpha]\,\in\, H^{1,1}_{\ddb}(X,\,\R)$ be a nef 
and big cohomology class. Then $$E_{nK}(f^*[\alpha])\,= \,f^{-1}(E_{nK}([\alpha])) \cup 
\mathrm{Exc}(f)\, ,$$ where $\mathrm{Exc}(f)$ is the exceptional locus of $f$, i.e., the singular
locus of the Jacobian of $f$.
\end{prop}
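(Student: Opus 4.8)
The plan is to reduce the whole statement to the Collins--Tosatti identity $E_{nK}(\cdot)=\mathrm{Null}(\cdot)$, which is available on both $X$ and $Y$ since both lie in class $\mathcal C$. First I would check that the hypotheses transfer to the pullback. The class $[\alpha]$ being nef and big on $Y$, Lemma~\ref{big}(2) shows that $f^*[\alpha]$ is nef and has positive self-intersection on $X$, and because $X$ is in class $\mathcal C$, Corollary~\ref{cor1} upgrades this to bigness. Thus $f^*[\alpha]$ is nef and big on $X$ while $[\alpha]$ is nef and big on $Y$, so the Collins--Tosatti theorem applies on each side and gives
\[
E_{nK}(f^*[\alpha])=\mathrm{Null}(f^*[\alpha]),\qquad E_{nK}([\alpha])=\mathrm{Null}([\alpha]).
\]
The problem is therefore equivalent to the cycle-theoretic identity $\mathrm{Null}(f^*[\alpha])=f^{-1}(\mathrm{Null}([\alpha]))\cup\mathrm{Exc}(f)$, which I would establish by two inclusions.

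The geometric input is the standard structure of a bimeromorphic morphism with $Y$ smooth: $f$ restricts to a biholomorphism away from $\mathrm{Exc}(f)$, and (by Zariski's main theorem) $\mathrm{Exc}(f)$ is exactly the union of the positive-dimensional fibres of $f$. For an irreducible $W\subseteq X$ with $W\not\subseteq\mathrm{Exc}(f)$, the dense open set $W\smallsetminus\mathrm{Exc}(f)$ is mapped isomorphically onto a dense subset of $V:=f(W)$, so $\dim V=\dim W=:d$ and $f_*[W]=[V]$; the projection formula then yields $\int_W (f^*\alpha)^d=\int_V\alpha^d$. On the other hand, if $W\subseteq\mathrm{Exc}(f)$ lies in a fibre, then $f$ contracts $W$ to a point, whence $f^*\alpha|_W=0$ and $\int_W(f^*\alpha)^{\dim W}=0$; this already gives $\mathrm{Exc}(f)\subseteq\mathrm{Null}(f^*[\alpha])$.

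For the inclusion ``$\subseteq$'' I would pick an irreducible $W$ with $\int_W(f^*\alpha)^{\dim W}=0$. If $W\subseteq\mathrm{Exc}(f)$ there is nothing to prove; otherwise the projection formula gives $\int_{f(W)}\alpha^{\dim f(W)}=0$ with $\dim f(W)=\dim W$, so $f(W)\subseteq\mathrm{Null}([\alpha])$ and hence $W\subseteq f^{-1}(\mathrm{Null}([\alpha]))$. For ``$\supseteq$'' the exceptional part is handled above, and for an irreducible $V\subseteq\mathrm{Null}([\alpha])$ I would decompose $f^{-1}(V)$ into its strict transform $\tilde V$, which is the \emph{unique} component not contained in $\mathrm{Exc}(f)$ and satisfies $\dim\tilde V=\dim V$ and $f_*[\tilde V]=[V]$, so that $\int_{\tilde V}(f^*\alpha)^{\dim V}=\int_V\alpha^{\dim V}=0$, together with the remaining components, which lie inside $\mathrm{Exc}(f)$; both types belong to $\mathrm{Null}(f^*[\alpha])$, giving $f^{-1}(\mathrm{Null}([\alpha]))\subseteq\mathrm{Null}(f^*[\alpha])$.

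The main obstacle I anticipate is the careful cycle-theoretic bookkeeping in the analytic (rather than projective) setting: justifying the projection formula $\int_W(f^*\alpha)^d=\int_{f(W)}\alpha^d$ for the possibly singular subvarieties involved (for instance by passing to a resolution of $W$ and using that $f$ is generically one-to-one there), and pinning down rigorously that the only component of $f^{-1}(V)$ escaping $\mathrm{Exc}(f)$ is the strict transform. The remaining ingredients — the transfer of nefness and bigness, and the vanishing of $f^*\alpha$ on contracted subvarieties — are routine given the results already recalled.
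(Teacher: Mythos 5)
Your proposal is correct and follows essentially the same route as the paper's proof: both reduce the statement to the corresponding identity for null loci via the Collins--Tosatti theorem (you make explicit, via Lemma~\ref{big}~(2) and Corollary~\ref{cor1}, the transfer of nefness and bigness to $f^*[\alpha]$ that the paper leaves implicit), and then establish the two inclusions by the same projection-formula and strict-transform arguments. The only minor divergence is the inclusion $\mathrm{Exc}(f)\subseteq \mathrm{Null}(f^*[\alpha])$, which you obtain by viewing $\mathrm{Exc}(f)$ as the union of positive-dimensional fibres (Zariski's main theorem) whose components are contracted to points, whereas the paper notes that each irreducible component $E$ of $\mathrm{Exc}(f)$ is a divisor with $\dim f(E)\le n-2$, so that $\int_E (f^*\alpha)^{n-1}=0$; both justifications are valid.
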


\begin{proof}
By \cite[p.~1168, Theorem~1.1]{CT}, it is enough to prove the analogous result for null loci. Let 
$n\,:=\,\dim_{\C} X$.

Let $E\,\subset\, \mathrm{Exc}(f)$ be an irreducible component; it has dimension $n-1$, while 
$\dim f(E)\,\le\, n-2$. Therefore, $\int_E (f^*\alpha)^{n-1}\,=\,\int_{f(E)} \alpha^{n-1}\,=\,0$ and $E 
\,\subset\, E_{nK}(f^*[\alpha])$. Next, if $V\,\subset\, E_{nK}([\alpha])$ is a $k$-dimensional subvariety not 
included in $f(\mathrm{Exc}(f))$, let $\widetilde V$ be its strict transform. We have 
$f^{-1}(V) \,=\,\widetilde{V} \cup F$ with $F\,\subset\, \mathrm{Exc}(f)$ and $f$ inducing a 
bimeromorphic morphism $f|_{\widetilde V}\,:\,\widetilde{V}\,\longrightarrow \, V$. From the 
identity $\int_{\widetilde V} (f^*\alpha)^{k} \,=\,\int_{V} \alpha^{k}$ it follows that
$$\widetilde{V}\,\subset \,E_{nK}(f^*[\alpha])\, .$$ 
In summary, $ f^{-1}(E_{nK}([\alpha])) \cup \mathrm{Exc}(f) \subset E_{nK}(f^*[\alpha])$.

Now, let $W\subset E_{nK}(f^*[\alpha])$ be an irreducible $k$-dimensional subvariety not 
included in $\mathrm{Exc}(f)$. The morphism $f$ induces a bimeromorphic morphism $$f|_W\,:\,W\,
\longrightarrow\, f(W)\, ,$$ and $\int_{f(W)} \alpha^k\,=\, \int_W (f^*\alpha)^k\,=\,0$. Therefore,
$f(W)\,\subset\, E_{nK}([\alpha)])$, which completes the proof of the Proposition.
\end{proof}

\subsection{Singular Ricci-flat metrics}
\label{RFK}
Let $Y$ be a compact complex manifold of dimension $n$ such that the first Chern class $c_1(T_Y)$ 
vanishes in $H^2(Y,\, \R)$. We assume that $Y$ admits a big cohomology class $[\alpha]\,\in\, 
H^{1,1}_{\ddb}(Y,\,\R)$. In particular, $Y$ lies in Fujiki class $\mathcal C$ (see 
Remark~\ref{fujiki}). We fix once and for all a K\"ahler modification
$$f\,:\,X\,\longrightarrow\, Y\, .$$

\noindent
It follows from Lemma~\ref{big} (3) that the class $f^*[\alpha]$ is big. The Jacobian of $f$ induces the following identity
\begin{equation}\label{e2}
K_X\,=\,f^*K_Y+E\, ,
\end{equation}
where
\begin{equation}\label{e3}
E\,=\, \sum_{i=1}^r a_i E_i
\end{equation}
is an effective $\Z$-divisor on $X$ contracted by $f$, meaning $\mathrm{codim}_Y f(E) \,\ge\, 2$, and
each $E_i$ is irreducible. More precisely, $\mathrm{Supp}(E)$ coincides with the exceptional locus of
$f$, that is the complement of the locus of points on $X$ in a neighborhood of which $f$ induces a
local biholomorphism.

Since $c_1(K_Y)\,=\,0 \,\in\, H^2(Y,\,\R)$, it follows from \eqref{e2} that $K_X$ is numerically
equivalent to the effective divisor $E$. Moreover, the class $f^*[\alpha] \,\in\, H^{1,1}(X,\, \R)$ is big
by Lemma~\ref{big}\, (3). From \cite[p.~200, Theorem~A]{BEGZ} we know that there exists a unique
closed positive current
\begin{equation}\label{dt}
T\,\in\, f^*{ \lbrack \alpha \rbrack }
\end{equation}
with finite energy such that $$-\mathrm{Ric}(T)\,=\,[E]\, .$$ 

In terms of Monge--Amp\`ere equations, this means that the non-pluripolar Monge--Amp\`ere measure of
$T$, denoted by $\la T^n\ra$, can be expressed as $$\la T^n \ra \,=\, |s_E|_h^2dV\, ,$$
where
\begin{itemize}
\item $h$ is a smooth hermitian metric on $\mathcal O_X(E)$ with Chern curvature tensor denoted by 
$\Theta_h(E)$,

\item $s_E\,\in\, H^0(X,\mathcal O_X(E))$ satisfies the condition $\mathrm{div}(s_E)\,=\,E$, and

\item $dV$ is the smooth volume form on $X$ satisfying $\Ric(dV)\,=\,-\Theta_h(E)$ and normalized such that
$\int_X |s_E|_h^2dV\,=\,\mathrm{vol}(f^*\alpha)\,=\,\mathrm{vol}(\alpha)$.
\end{itemize}

\begin{propdef}
\label{metric}
Let $Y$ be a compact complex manifold with trivial first Chern class endowed with a big cohomology class $[\alpha] \in H^{1,1}_{\d\db}(Y,\,\R)$. Let $f\,:\,X\,\longrightarrow\, Y$ be a K\"ahler modification, and let $T\in f^*[\alpha]$ be the associated singular Ricci-flat metric. Then 
\begin{enumerate}
 \item There exists a closed, positive $(1,1)$-current $\omega\in [\alpha]$ on $Y$ such that $T=f^*\om$. 
 \item The current $\omega$ is independent of the choice of the K\"ahler modification of $Y$.
 \end{enumerate}
 We call $\omega$ the singular Ricci-flat metric in $[\alpha]$.
\end{propdef}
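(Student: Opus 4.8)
The plan is to treat the two assertions separately: the descent in (1) rests on a maximum-principle argument along the fibers of $f$, while the independence in (2) follows by comparing two modifications through a common dominating one and invoking the uniqueness in \cite{BEGZ}. For (1), fix a smooth closed real $(1,1)$-form $\a$ representing $[\a]$ on $Y$ and write the current \eqref{dt} as $T\,=\,f^*\a+dd^c\vp$ for an $f^*\a$-plurisubharmonic function $\vp$ on $X$, which is upper semicontinuous and bounded above. The key observation is that $\vp$ is constant along the fibers of $f$. Indeed, let $F\,=\,f^{-1}(y)$ be a fiber; since $f|_F$ is constant we have $(f^*\a)|_F\,=\,0$, so writing $f^*\a\,=\,dd^c u$ locally for a smooth potential $u$ shows that $\vp|_F\,=\,(u+\vp)|_F-u|_F$ is the difference of a plurisubharmonic and a pluriharmonic function, hence plurisubharmonic on $F$. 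As $F$ is a compact connected complex subvariety (the fibers of a bimeromorphic morphism from a smooth manifold onto a normal space are connected), the maximum principle forces $\vp|_F$ to be constant.

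Consequently $\vp\,=\,\psi\circ f$ for a well-defined function $\psi$ on $Y$. On the Zariski open set $V\,:=\,Y\smallsetminus f(\mathrm{Exc}(f))$, over which $f$ restricts to a biholomorphism, $\psi$ is manifestly $\a$-plurisubharmonic and bounded above by $\sup_X\vp$. Since $f(\mathrm{Exc}(f))\,=\,f(\s(E))$ has codimension $\ge 2$ in $Y$ by the discussion following \eqref{e3}, the Riemann-type extension theorem for quasi-plurisubharmonic functions allows me to extend $\psi$ to an $\a$-plurisubharmonic function on all of $Y$. Setting $\om\,:=\,\a+dd^c\psi$ then produces a closed positive $(1,1)$-current in $[\a]$, and $f^*\om\,=\,f^*\a+dd^c(\psi\circ f)\,=\,f^*\a+dd^c\vp\,=\,T$, the middle equality holding because $\psi\circ f$ and $\vp$ agree on the dense open set $f^{-1}(V)$, hence everywhere. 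This gives (1).

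For (2), let $f_1\,:\,X_1\to Y$ and $f_2\,:\,X_2\to Y$ be two Kähler modifications, with associated currents $\om_1,\om_2\in[\a]$ produced by (1). Resolving the graph of the bimeromorphic map $X_1\dashrightarrow X_2$ and staying inside the Kähler category by Varouchas' theorem furnishes a third Kähler modification $f_3\,:\,X_3\to Y$ together with bimeromorphic morphisms $g_i\,:\,X_3\to X_i$ satisfying $f_i\circ g_i\,=\,f_3$. I would then prove that $g_i^*T_i$ coincides with the BEGZ current $T_3\in f_3^*[\a]$ on $X_3$. Granting this, one has $f_3^*\om_i\,=\,g_i^*f_i^*\om_i\,=\,g_i^*T_i\,=\,T_3$ for $i\,=\,1,2$, and since $f_3$ is bimeromorphic the pushforward $f_{3*}$ is a left inverse of $f_3^*$ on these currents; therefore $\om_1\,=\,f_{3*}T_3\,=\,\om_2$.

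The main obstacle is the identity $g_i^*T_i\,=\,T_3$. Both sides are closed positive currents of finite energy in the big class $f_3^*[\a]$, so by the uniqueness part of \cite[Theorem~A]{BEGZ} it suffices to check that $g_i^*T_i$ satisfies the defining equation $-\Ric(g_i^*T_i)\,=\,[E_3]$, where $K_{X_3}\,=\,f_3^*K_Y+E_3$ as in \eqref{e2}. Writing $K_{X_3}\,=\,g_i^*K_{X_i}+E_{g_i}$ for the relative ramification divisor of $g_i$ and combining with $K_{X_i}\,=\,f_i^*K_Y+E_i$ yields $E_3\,=\,g_i^*E_i+E_{g_i}$. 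The point is then to verify that the non-pluripolar Monge--Amp\`ere measure transforms as $\la (g_i^*T_i)^n\ra\,=\,g_i^*\la T_i^n\ra$ and that the Jacobian of $g_i$ contributes precisely the factor $|s_{E_{g_i}}|^2$, so that the equation $\la T_i^n\ra\,=\,|s_{E_i}|^2\,dV_i$ on $X_i$ pulls back exactly to $\la (g_i^*T_i)^n\ra\,=\,|s_{E_3}|^2\,dV_3$, that is $-\Ric(g_i^*T_i)\,=\,[E_3]$. This compatibility of the finite-energy Ricci-flat current with further modifications is the technical heart of the proof; once it is established, both (1) and (2) follow.
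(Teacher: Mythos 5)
Your proposal follows essentially the same route as the paper: for (1), fiberwise constancy of the potential via the maximum principle and descent to $Y$, and for (2), domination of the two modifications by a common K\"ahler one, pulling both Ricci-flat currents back there, and invoking the uniqueness statement of \cite[Theorem~A]{BEGZ} before pushing forward. The single step you leave open --- that $g_i^*T_i$ has finite energy (minimal singularities) in $f_3^*[\alpha]$ and solves $-\Ric(\cdot)\,=\,[E_3]$, i.e.\ the compatibility of the non-pluripolar Monge--Amp\`ere measure with bimeromorphic pullback --- is exactly what the paper discharges by citing \cite[Theorem~B]{BEGZ} and \cite[Proposition~1.12]{BEGZ}, so your outline is complete modulo that reference.
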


\begin{proof}
For the first item, let us write $T=f^*\alpha+dd^c u$ for some $(f^*\alpha)$-psh function $u$ on $X$. The 
restriction of $u$ to any fiber of $f$ is psh, but the fibers of $f$ are connected. By the maximum principle, 
$u$ is constant on the fibers of $f$, hence can be written as $u\,=\,\pi^*v$ for some $\alpha$-psh function $v$ 
on $Y$. Then, $T\,=\,f^*\omega$ with $\omega\,=\,\alpha+dd^c v$.

Now, let $f'\,:\,X'\,\longrightarrow\, Y$ be another K\"ahler modification, and let $T'\,:=\,(f')^*\om'$ be the
associated singular Ricci-flat current. Let $Z$ be a desingularization of $X'\times_Y X$, so that we have the
following Cartesian square where all maps are bimeromorphic
 $$
\xymatrix{
     Z \ar[rr]^{g'}  \ar[d]_{g} && X  \ar[d]^{f} \\
      X' \ar[rr]^{f'}       && Y,
  }
  $$
Let $h\,:=\,f\circ g'\,=\,f'\circ g$. By \cite[p.~201, Theorem B]{BEGZ} and \cite[Proposition~1.12]{BEGZ}, both
currents $g^*T'$ and $(g')^*T$ have minimal singularities in $h^*[\alpha]$ and are solutions of the non-pluripolar
Monge-Amp\`ere equation $-\Ric(\cdotp)\,=\,[K_{Z/Y}]$. By uniqueness of the solution of that
equation \cite[Theorem~A]{BEGZ}, if follows that $h^*\om\,=\,h^*\om'$. Taking the direct image of the
previous equality, it is deduced that $\om\,=\,\om'$.
\end{proof}

The regularity properties of the current $T$ (or $\omega$) are quite mysterious in general. For instance, it is 
not known whether $T$ is smooth on a Zariski open set. However, things become simpler once we assume 
additionally that the cohomology class $[\alpha]$ is nef.

\begin{propdef}
\label{om}
In the set-up of Proposition-Definition~\ref{metric}, assume additionally that the cohomology class $[\alpha] $ is nef. Then, the singular Ricci-flat current $\omega\in [\alpha]$ is smooth on a non-empty Zariski open subset of $Y$. 

We define $\Omega$ to be the largest Zariski open subset in restriction of which $\om$ is a genuine K\"ahler 
form. Then
$$\emptyset \neq f(\mathrm{Amp}(f^*[\alpha]))\, \subseteq \,\Omega\, \subseteq\, \mathrm{Amp}([\alpha])$$
for any K\"ahler modification $f\,:\,X\,\longrightarrow\, Y$ (see Definition~\ref{def1}). 
\end{propdef}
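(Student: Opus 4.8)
The plan is to reduce the smoothness assertion for $\om$ on $Y$ to the corresponding regularity statement for the singular Ricci-flat current $T=f^*\om$ on the K\"ahler modification $X$, where the Monge--Amp\`ere machinery of \cite{BEGZ} applies directly. Since $[\alpha]$ is now assumed nef \emph{and} big, its pullback $f^*[\alpha]$ is nef and big on $X$ by Lemma~\ref{big} (2)--(3). First I would invoke the key regularity result for finite-energy Ricci-flat currents in nef and big classes: by \cite[Theorem~B]{BEGZ} (or the subsequent work of the third author and others), the current $T$ has minimal singularities in $f^*[\alpha]$ and is a genuine smooth K\"ahler metric precisely on the ample locus $\mathrm{Amp}(f^*[\alpha])$. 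The point is that on the ample locus the density $\la T^n\ra=|s_E|_h^2\,dV$ is bounded above and below away from zero (since $E$ is $f$-exceptional, $|s_E|_h^2$ is bounded and strictly positive off $\mathrm{Supp}(E)$), so the complex Monge--Amp\`ere equation is non-degenerate there and elliptic bootstrapping yields smoothness of $T$.

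Next I would transfer this regularity from $X$ down to $Y$ through the relation $T=f^*\om$ established in Proposition-Definition~\ref{metric}. On the open set $\mathrm{Amp}(f^*[\alpha])\setminus\mathrm{Exc}(f)$, the map $f$ is a biholomorphism onto its image; since $T$ is a smooth K\"ahler form upstairs and $f$ is a local biholomorphism there, $\om=(f^{-1})^*T$ is a smooth K\"ahler form on $f(\mathrm{Amp}(f^*[\alpha])\setminus\mathrm{Exc}(f))$. Because $\mathrm{Amp}(f^*[\alpha])$ is a non-empty Zariski open subset of $X$ and $\mathrm{Exc}(f)$ is a proper analytic subset, the image $f(\mathrm{Amp}(f^*[\alpha]))$ is a non-empty Zariski open subset of $Y$ (using that $f$ is bimeromorphic and contracts only $\mathrm{Supp}(E)$); this gives the first inclusion $\emptyset\neq f(\mathrm{Amp}(f^*[\alpha]))\subseteq\Omega$, where $\Omega$ is defined as the locus where $\om$ is a genuine K\"ahler form.

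For the reverse inclusion $\Omega\subseteq\mathrm{Amp}([\alpha])$, I would argue that wherever $\om$ is a smooth K\"ahler form on $Y$, the class $[\alpha]$ must be locally K\"ahler, so $\om$ itself is a positive current with empty singular set near such points; hence those points lie outside the non-K\"ahler locus $E_{nK}([\alpha])$, which is exactly $\mathrm{Amp}([\alpha])$. Concretely, if $y\in\Omega$ then $\om$ furnishes a positive current in $[\alpha]$ that is smooth near $y$, forcing $y\notin E_{nK}([\alpha])$ by Definition~\ref{def1}. This also shows that $\Omega$ itself is Zariski open, being trapped between two Zariski open sets. Finally, the independence of $\om$ (hence of $\Omega$) from the choice of modification is already furnished by Proposition-Definition~\ref{metric}(2), so the displayed chain of inclusions holds for every K\"ahler modification.

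The main obstacle I anticipate is the precise regularity input on $X$: one must know that the finite-energy Ricci-flat current $T$ is not merely locally bounded but genuinely smooth on $\mathrm{Amp}(f^*[\alpha])$, and that its smooth locus is exactly the ample locus. This requires the sharp version of the \cite{BEGZ} regularity theory for nef and big classes---where nefness prevents the formation of positive Lelong numbers in the ample locus and big-ness guarantees the measure stays non-degenerate there---together with Proposition~\ref{null} to identify $E_{nK}(f^*[\alpha])$ cleanly in terms of $E_{nK}([\alpha])$ and $\mathrm{Exc}(f)$. Matching the smooth locus of $T$ with $\mathrm{Amp}(f^*[\alpha])$ and then pushing forward carefully across the exceptional divisor is the delicate step; the rest is bookkeeping with Zariski-open sets and the pullback relation $T=f^*\om$.
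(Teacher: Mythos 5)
Your overall strategy coincides with the paper's: pull back to a K\"ahler modification $f\colon X\to Y$, note that $f^*[\alpha]$ is nef and big (Lemma~\ref{big}), invoke the regularity theory of \cite{BEGZ} for the current $T\in f^*[\alpha]$, use Proposition~\ref{null} to see that $\mathrm{Supp}(E)=\mathrm{Exc}(f)$ is contained in $E_{nK}(f^*[\alpha])$, and push the resulting K\"ahler metric down through the biholomorphism $f|_{\mathrm{Amp}(f^*[\alpha])}$ to get $\emptyset\neq f(\mathrm{Amp}(f^*[\alpha]))\subseteq\Om$. One caveat on the mechanism you propose for the key regularity input: deducing smoothness of $T$ on $\mathrm{Amp}(f^*[\alpha])$ from minimal singularities plus ``elliptic bootstrapping, since the density $|s_E|_h^2$ is non-degenerate there'' is not a valid argument. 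The complex Monge--Amp\`ere equation has no local regularity theory: a bounded psh solution of $(dd^c u)^n=f\,dV$ with $f>0$ smooth need not be $C^2$. Smoothness on the ample locus is a global statement, obtained from the approximating family $\omte$ of Eq.~\eqref{regu} and Yau-type a priori estimates which degenerate only near $E_{nK}(f^*[\alpha])\cup\mathrm{Supp}(E)$; this is exactly why the paper invokes the \emph{proof} of \cite[Theorem C]{BEGZ} (recorded as Theorem~\ref{convergence}) rather than Theorem~B. Also, your claim that the smooth locus is ``precisely'' $\mathrm{Amp}(f^*[\alpha])$ is an overstatement; only one inclusion is known or needed.

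The genuine gap is your proof of the inclusion $\Om\subseteq\mathrm{Amp}([\alpha])$. You argue that if $y\in\Om$ then $\om$ is a positive current in $[\alpha]$ that is smooth near $y$, ``forcing $y\notin E_{nK}([\alpha])$ by Definition~\ref{def1}.'' But in Definition~\ref{def1} the intersection defining $E_{nK}([\alpha])$ runs only over positive currents \emph{with analytic singularities}, and $E_+(\cdotp)$ is only defined for such currents. The current $\om$ is not known to have analytic singularities --- the paper explicitly emphasizes that its behaviour outside $\Om$ is mysterious --- so smoothness of $\om$ near $y$ gives no information whatsoever about $E_{nK}([\alpha])$. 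To exclude $y$ from $E_{nK}([\alpha])$ one must exhibit a current with analytic singularities in $[\alpha]$ that is non-singular at $y$ (for instance via Demailly regularization \cite{De} and Boucksom's theorem \cite{Bou04}, or via the null-locus description of \cite{CT} together with a mass argument showing that no subvariety through $y$ can have vanishing $\alpha$-degree, since $\om$ is K\"ahler on a Zariski-dense open subset of it); your proposal supplies no such construction. For the same reason, your parenthetical remark that $\Om$ is Zariski open ``being trapped between two Zariski open sets'' is not a proof of anything. It is worth noting that the paper's own proof is silent on this second inclusion: it only establishes that $\om$ is K\"ahler on $f(\mathrm{Amp}(f^*[\alpha]))$ and that $f(\mathrm{Amp}(f^*[\alpha]))\subseteq\mathrm{Amp}([\alpha])$, the latter being immediate from Proposition~\ref{null}; so the step you attempted to shortcut is precisely the one that requires a genuine argument beyond what is written in either text.
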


\begin{proof}
Let $f\,:\,X\,\longrightarrow\,Y$ be a K\"ahler modification. As $[\alpha]$ is nef, $f^*[\alpha]$ is nef as well (see 
Lemma~\ref{big} (2)) and the proof of \cite[p.~201, Theorem C]{BEGZ} applies verbatim to give that 
$T$ is smooth outside $E_{nK}(f^*[\alpha]) \smallsetminus \mathrm{Supp}(E)$. Note that 
Proposition~\ref{null} shows that this locus is just $E_{nK}(f^*[\alpha])$. In particular, this shows that the current $\omega \in [\alpha]$ on $Y$ is a genuine Ricci-flat K\"ahler metric on the non-empty Zariski open set
$$f(\mathrm{Amp}(f^*[\alpha]))\, \subseteq \, \mathrm{Amp}([\alpha]),$$
which concludes the proof. 
\end{proof}

We will need a refinement of the result above which will be explained next.

Let $t,\,\ep\,>\, 0$; by Yau's theorem (Calabi's conjecture) \cite{Ya}, there exists a unique 
K\"ahler metric $\omte \,\in\, f^*{\lbrack \alpha \rbrack } +t[\om_X]$ solving the equation 
\begin{equation}
\label{regu}
\mathrm{Ric}(\omte)\,=\, t\om_X-\theta_{\ep}\, ,
\end{equation} where $\theta_{\ep} \,\in\, c_1(E)$ is a 
regularization of the current defined by integration along $E$. For instance, such a smooth closed 
$(1,1)$--form $\theta_{\ep}$ can be constructed as follows: pick smooth hermitian metrics $h_i$ on 
$\mathcal O_X(E_i)$ (see \eqref{e3}), and choose a holomorphic section $s_i\,\in\, H^0(X,\mathcal O_X(E_i))$ for each 
$i$ cutting out $E_i$ in the sense that $\div(s_i)\,=\,E_i$; then set $$\theta_{\ep}\,=\, 
\sum_{i=1}^r a_i\Big(\Theta_{h_i}(E_i)+dd^c \log (|s_i|^2_{h_i}+\ep^2)\Big)\, ,$$ where 
$\Theta_{h_i}(E_i)$ is the Chern curvature of the hermitian line bundle $(\mathcal O_X(E_i), h_i)$. 

Now, the proof of \cite[p.~201, Theorem C]{BEGZ} can be adapted without any significant changes to 
obtain the following result.

\begin{thm}[{\cite[Theorem C]{BEGZ}}] 
\label{convergence}
In the set-up of Proposition-Definition~\ref{metric}, assume furthermore that 
$[\alpha]$ is nef, and let $\omte\,=\, f^*{\lbrack \alpha \rbrack } +t[\om_X]$ be the K\"ahler 
form solving Eq.~\eqref{regu}.
When $t\,\to\, 0$ and $\ep\,\to\, 0$, the form $\omte$ converges to the current $T$ in \eqref{dt} in the
weak topology of currents, and also in the $\mathcal C^{\infty}_{\rm loc}(\mathrm{Amp}(f^*[\alpha]))$ topology.
\end{thm}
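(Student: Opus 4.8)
The plan is to follow the proof of \cite[Theorem~C]{BEGZ}, the only new features being the extra parameter $t$ and the regularization parameter $\ep$, neither of which affects the structure of the argument. First I would rewrite Eq.~\eqref{regu} as a genuine complex Monge--Amp\`ere equation. Fix a smooth closed form $\a_0\in f^*[\a]$ and write $\omte=\a_0+t\om_X+\ddb\vp_{t,\ep}$; since $f^*[\a]$ is nef and $t[\om_X]$ is K\"ahler, the class $f^*[\a]+t[\om_X]$ is K\"ahler, so $\omte$ is a smooth K\"ahler metric. Taking $-\Ric$ of $\omte^n=F_{t,\ep}\,dV$ and comparing with \eqref{regu} and the construction of $\theta_\ep$, one finds that $\vp_{t,\ep}$ solves
\[
\bigl(\a_0+t\om_X+\ddb\vp_{t,\ep}\bigr)^n \,=\, c_{t,\ep}\,\prod_{i=1}^r\bigl(|s_i|^2_{h_i}+\ep^2\bigr)^{a_i}\,dV,
\]
where $c_{t,\ep}$ is the normalizing constant imposed by equality of total masses. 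As $t,\ep\to 0$ the cohomological volume $(f^*[\a]+t[\om_X])^n$ tends to $\mathrm{vol}(\a)$ and the right-hand integral tends to $\int_X|s_E|_h^2\,dV=\mathrm{vol}(\a)$ by the normalization of $dV$ in Section~\ref{RFK}, so $c_{t,\ep}\to 1$. For $\ep>0$ the right-hand side is a smooth positive volume form, whence the existence and smoothness of $\vp_{t,\ep}$ by Yau's theorem \cite{Ya}.

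Next I would establish weak convergence. After normalizing, say, $\sup_X\vp_{t,\ep}=0$, the family $\{\vp_{t,\ep}\}$ consists of $(\a_0+t\om_X)$-psh functions of uniformly bounded mass, hence is relatively compact in $L^1(X)$. Any cluster value $\vp$ is $\a_0$-psh, and, using the continuity of the non-pluripolar Monge--Amp\`ere operator of \cite{BEGZ} under the monotone regularizations $|s_i|^2+\ep^2\downarrow|s_i|^2$ and $t\downarrow 0$, it solves the limiting equation $\la(\a_0+\ddb\vp)^n\ra=|s_E|_h^2\,dV$ that defines $T$ in \eqref{dt}. Since the finite-energy solution of that equation is unique by \cite[Theorem~A]{BEGZ}, the whole family converges and $\omte\to T$ weakly.

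It remains to upgrade this to convergence in $\mathcal C^\infty_{\mathrm{loc}}(\mathrm{Amp}(f^*[\a]))$, which is the technical heart of the argument. Fix $K\Subset\mathrm{Amp}(f^*[\a])$; by Proposition-Definition~\ref{om}, $T$ is a genuine K\"ahler metric near $K$, and combined with the weak convergence this gives a uniform bound on $\vp_{t,\ep}$ over $K$. The decisive step is a uniform second-order estimate $\tr_{\om_X}\omte\le C_K$ on $K$. I would obtain it from the Chern--Lu/Aubin--Yau inequality applied to $\log\tr_{\om_X}\omte$, using the lower bound $\Ric(\omte)=t\om_X-\theta_\ep\ge-\theta_\ep$ together with the bounded bisectional curvature of $\om_X$; the unbounded contributions of $\theta_\ep$ and of the factors $\log(|s_i|^2_{h_i}+\ep^2)$ are absorbed by testing against a weight of the shape $\log\tr_{\om_X}\omte-A\vp_{t,\ep}+\sum_i a_i\log(|s_i|^2_{h_i}+\ep^2)$. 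Because this weight tends to $-\infty$ along $\mathrm{Supp}(E)$, its supremum is attained away from $E$, where the maximum principle yields a bound that is uniform in $(t,\ep)$ and locally uniform on the ample locus (recall from Proposition~\ref{null} that $\mathrm{Amp}(f^*[\a])$ avoids $\mathrm{Supp}(E)$). This produces a two-sided estimate $C_K^{-1}\om_X\le\omte\le C_K\om_X$ on $K$; the equation is then uniformly elliptic there, so Evans--Krylov followed by Schauder bootstrapping give uniform $\mathcal C^k(K)$ bounds for every $k$. By Arzel\`a--Ascoli the family is precompact in $\mathcal C^\infty_{\mathrm{loc}}(\mathrm{Amp}(f^*[\a]))$, and since its only weak limit is $T$, the convergence holds in this topology. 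The main obstacle is precisely this uniform second-order estimate: one must control the singular terms of $\theta_\ep$ and of the right-hand side, which blow up along $E$ as $\ep\to0$, simultaneously in both parameters through the correct choice of weight --- this is the one point where checking that the argument of \cite[Theorem~C]{BEGZ} "applies verbatim" requires genuine care.
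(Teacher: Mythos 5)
The paper's own proof of this theorem is a single sentence deferring to \cite[Theorem~C]{BEGZ}, so your proposal has to be measured against BEGZ's argument; your skeleton (perturbed Monge--Amp\`ere equation solved by Yau, weak convergence via uniqueness of the finite-energy solution, Evans--Krylov and Schauder bootstrap) is indeed that argument's skeleton. But there is a genuine gap at the step you yourself call the heart, namely the uniform second-order estimate: the weight $\log\tr_{\om_X}\omte-A\vp_{t,\ep}+\sum_i a_i\log(|s_i|^2_{h_i}+\ep^2)$ cannot produce a bound uniform in $t$. The term $\sum_i a_i\log(|s_i|^2_{h_i}+\ep^2)$ does its job: it absorbs $\theta_\ep$ and the degenerate right-hand side uniformly in $\ep$, because $a_i\ge 0$ and $dd^c\log(|s_i|^2_{h_i}+\ep^2)\ge -C\om_X$. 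The problem is the $t$-degeneration. In the maximum-principle computation, $-A\vp_{t,\ep}$ contributes $-A\Delta_{\omte}\vp_{t,\ep}=-An+A\tr_{\omte}(\a_0+t\om_X)$, where $\a_0\in f^*[\a]$ is the fixed smooth reference form; since $f^*[\a]$ is merely nef, all one knows is $\a_0\ge -C_0\om_X$, so for small $t$ this term has the wrong sign and cannot absorb the curvature term $-B\tr_{\omte}\om_X$ in the Aubin--Yau/Chern--Lu inequality. The positivity you are implicitly leaning on, $t\om_X$, disappears exactly in the limit under study. The missing ingredient --- and it is the central device of \cite[Theorem~C]{BEGZ} --- is Tsuji's trick via Boucksom's theorem recalled in Section~\ref{nk}: choose $\psi$ quasi-psh with analytic singularities exactly along $E_{nK}(f^*[\a])$ and with $\a_0+dd^c\psi\ge\delta\om_X$, $\delta>0$ independent of $(t,\ep)$, and replace $-A\vp_{t,\ep}$ by $-A(\vp_{t,\ep}-\psi)$. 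Then $-A\Delta_\omte(\vp_{t,\ep}-\psi)\ge -An+A\delta\tr_\omte\om_X$, the maximum is automatically attained off $E_{nK}(f^*[\a])$ because the weight is $-\infty$ there, and one obtains a bound on $\tr_{\om_X}\omte$ that is uniform in $(t,\ep)$ on compact subsets of $\mathrm{Amp}(f^*[\a])$ and blows up along $E_{nK}(f^*[\a])$.

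The conclusion you claim also signals the problem: your weight only ``sees'' $\mathrm{Supp}(E)$, so if the computation worked it would give bounds locally uniform on all of $X\smallsetminus \mathrm{Supp}(E)$. That is false in general: by Proposition~\ref{null}, $E_{nK}(f^*[\a])=f^{-1}(E_{nK}([\a]))\cup\mathrm{Exc}(f)$ is strictly larger than $\mathrm{Supp}(E)=\mathrm{Exc}(f)$ whenever $E_{nK}([\a])\neq\emptyset$, and two-sided bounds $C_K^{-1}\om_X\le\omte\le C_K\om_X$ near a component $V$ of $f^{-1}(\mathrm{Null}([\a]))$ are impossible, since they would pass to the limit and make $T$ a K\"ahler metric with bounded potential near $V$, forcing $\int_V(f^*\a)^{\dim V}>0$. (Also, for fixed $\ep>0$ your weight is bounded below on $\mathrm{Supp}(E)$ by $\sum_i a_i\log\ep^2$ plus a constant, so nothing forces the maximum off $E$ --- nor is that needed, as everything is smooth there.) Two smaller points. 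The uniform $C^0$ bound on $K$ does not follow from weak convergence plus smoothness of $T$ near $K$: $L^1$ convergence of psh potentials gives no uniform lower bound on compact sets; the correct statement is $\vp_{t,\ep}\ge V_{\min}-C$ relative to a potential with minimal singularities, and it comes from the capacity estimates of \cite{BEGZ}, applicable uniformly because the densities $c_{t,\ep}\prod_i(|s_i|^2_{h_i}+\ep^2)^{a_i}$ are uniformly bounded in $L^\infty$ (again $a_i\ge 0$). Finally, a bookkeeping issue inherited from the paper: your Monge--Amp\`ere equation is the right one, and it corresponds to $\Ric(\omte)=-\theta_\ep$ rather than to \eqref{regu} as printed; since $t\om_X-\theta_\ep$ lies in $t[\om_X]-[E]\neq -[E]=c_1(X)$, no K\"ahler form can satisfy \eqref{regu} literally, and $t$ enters only through the K\"ahler class (compare ``As $\Ric\omte=-\theta_\ep$'' in the paper's proof of Theorem~\ref{Bochner principle}).
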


\subsection{Flatness of tensors}
It will be convenient in this section to refer to the following set-up. 
\begin{set} 
\label{set}
Let $Y$ be a compact complex manifold with trivial first Chern class endowed with a nef and big cohomology class $[\alpha] \,\in\, H^{1,1}_{\d\db}(Y,\, \R)$. We denote by $\om$ the singular Ricci-flat metric from Proposition-Definition~\ref{metric}, and we let $\Om$ be its smooth locus; see Proposition-Definition~\ref{om}. 
\end{set}

\begin{thm}\label{Bochner principle}
In Setting~\ref{set}, let $\tau \,\in\, H^0(Y,\,{T_Y^*}^{\otimes p})$ with $p\, \geq \, 0$.

\noindent
Then, the restriction $\tau|_{\Om}$ to $\Om$ in \eqref{om} is parallel
with respect to the Ricci-flat K\"ahler metric $\om|_{\Om}$.
\end{thm}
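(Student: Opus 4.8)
The plan is to transfer the problem to the Kähler modification $f\,:\,X\,\longrightarrow\, Y$ and to run a Bochner--Weitzenböck argument on the smooth approximants $\omte$ of Theorem~\ref{convergence}. First I would pull back $\tau$: since $\tau$ is a \emph{covariant} tensor, its pullback $\tilde\tau\,:=\,f^*\tau\,\in\, H^0(X,\,(\Omega^1_X)^{\otimes p})$ is a genuine holomorphic tensor (no poles appear, precisely because $\tau$ is a section of a power of the cotangent bundle). By Proposition~\ref{null} we have $\mathrm{Exc}(f)\,\subseteq\, E_{nK}(f^*[\alpha])$, so $f$ restricts to a biholomorphism from $\mathrm{Amp}(f^*[\alpha])$ onto the Zariski open set $f(\mathrm{Amp}(f^*[\alpha]))\,\subseteq\,\Om$, and there $T\,=\,f^*\om$ by Proposition-Definition~\ref{metric}. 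Hence it suffices to prove that $\tilde\tau$ is parallel for the Kähler metric $T$ on $\mathrm{Amp}(f^*[\alpha])$: parallelism then descends to $f(\mathrm{Amp}(f^*[\alpha]))$, and since $\nabla_\om\tau$ is continuous on the connected manifold $\Om$ and vanishes on the dense open subset $f(\mathrm{Amp}(f^*[\alpha]))$, it must vanish on all of $\Om$.

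On the compact Kähler manifold $(X,\,\omte)$ I would then apply the classical Bochner formula to the holomorphic tensor $\tilde\tau$: starting from the pointwise identity for a holomorphic section of a Hermitian bundle, and integrating $\Delta_\omte|\tilde\tau|^2_\omte$ over $X$ (whose total integral vanishes), one obtains
$$\int_X |\nabla_\omte\tilde\tau|^2_\omte\,\omte^n \,=\, -\int_X \la\,\Ric(\omte)\circledast\tilde\tau,\,\tilde\tau\,\ra_\omte\,\omte^n,$$
where $\circledast$ denotes the natural action (summed over the $p$ slots) of the Ricci endomorphism on covariant $p$-tensors, the sign reflecting that the Chern curvature of $(\Omega^1_X)^{\otimes p}$ is minus the Ricci contraction; in particular exact Ricci-flatness would instantly yield parallelism. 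Substituting the normalization $\Ric(\omte)\,=\,t\om_X-\theta_\ep$ from \eqref{regu} splits the right-hand side as
$$\int_X |\nabla_\omte\tilde\tau|^2_\omte\,\omte^n \,=\, -\,t\int_X \la\,\om_X\circledast\tilde\tau,\,\tilde\tau\,\ra_\omte\,\omte^n \,+\, \int_X \la\,\theta_\ep\circledast\tilde\tau,\,\tilde\tau\,\ra_\omte\,\omte^n.$$

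The crux, and the step I expect to be the main obstacle, is to show that both terms tend to $0$ as $t,\ep\to 0$. The first term is $\le 0$ and of size $O(t)\int_X |\tilde\tau|^2_\omte\,\mathrm{tr}_{\omte}(\om_X)\,\omte^n$; one must bound this weighted integral uniformly, which is delicate because $\omte$ collapses along the contracted divisor $E$ (there the Monge--Amp\`ere density $|s_E|^2_h$ of the volume $\omte^n$ degenerates, while the cotangent norm $|\tilde\tau|^2_\omte$ may blow up). The second, exceptional-divisor term is the genuine obstacle: since $\theta_\ep$ converges weakly to the current of integration $[E]$, its mass concentrates on $\mathrm{Supp}(E)\,=\,\mathrm{Exc}(f)$, and one must prove that this concentration is killed by the vanishing of $\omte^n$ along $E$, together with the fact that $\tilde\tau\,=\,f^*\tau$ is pulled back from $Y$ and hence annihilates the directions contracted by $f$. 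Controlling these two competing degenerations uniformly in $(t,\ep)$ is exactly where the estimates borrowed from \cite{Gue} (and \cite{BEGZ}) enter, and it is the heart of the argument.

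Granting $\int_X |\nabla_\omte\tilde\tau|^2_\omte\,\omte^n\to 0$, I would conclude as follows. By Theorem~\ref{convergence}, $\omte\to T$ in $\mathcal C^\infty_{\mathrm{loc}}(\mathrm{Amp}(f^*[\alpha]))$, so for every compact $K\,\subset\,\mathrm{Amp}(f^*[\alpha])$ one has $\nabla_\omte\tilde\tau\to\nabla_T\tilde\tau$ and $\omte^n\to T^n$ uniformly on $K$. Therefore
$$\int_K |\nabla_T\tilde\tau|^2_T\,T^n \,=\, \lim_{t,\ep\to 0}\int_K |\nabla_\omte\tilde\tau|^2_\omte\,\omte^n \,\le\, \limsup_{t,\ep\to 0}\int_X |\nabla_\omte\tilde\tau|^2_\omte\,\omte^n \,=\, 0.$$
As $T^n$ is a positive volume form on the Kähler locus $\mathrm{Amp}(f^*[\alpha])$ and $K$ is arbitrary, this forces $\nabla_T\tilde\tau\,\equiv\,0$ there, which by the reduction of the first paragraph gives $\nabla_\om(\tau|_\Om)\,=\,0$ and completes the proof.
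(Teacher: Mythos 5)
Your overall architecture coincides with the paper's: pull back $\tau$ to the K\"ahler modification, run a Bochner-type integral identity against the approximants $\omte$ of Eq.~\eqref{regu}, show the curvature terms vanish as $t,\ep\to0$, and conclude on $\mathrm{Amp}(f^*[\alpha])$ via the smooth local convergence of Theorem~\ref{convergence} and Fatou. But there is a genuine gap exactly at the point you yourself flag as ``the heart of the argument'': you never prove that $\int_X \la\theta_\ep\circledast\tilde\tau,\tilde\tau\ra_{\omte}\,\omte^n \to 0$; you only assert that this is where estimates ``borrowed from \cite{Gue} and \cite{BEGZ}'' enter. Moreover, in the \emph{unweighted} form you chose, this estimate is not merely unproven but out of reach with the tools at hand. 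Since $\tilde\tau$ is a fixed smooth covariant $p$-tensor, the only available pointwise bound is $|\tilde\tau|^2_{\omte}\le C\,(\mathrm{tr}_{\omte}\om_X)^p$, and this factor blows up along $E$ as $t,\ep\to 0$ --- precisely where $\theta_\ep$ concentrates its mass. Your curvature term is therefore only dominated by integrals of the shape $\int_X \frac{\ep^2}{|s|^2+\ep^2}\,(\mathrm{tr}_{\omte}\om_X)^{p+1}\,\omte^n$, i.e.\ by $p+1$ powers of the trace, whereas the only uniform control available (Lemma~\ref{lem2}, which rests on Yau's estimate $\omte\le C_t\,\om_X$ for fixed $t$) handles a \emph{single} power, $\int_X \frac{\ep^2}{|s|^2+\ep^2}\,\om_X\wedge\omte^{n-1}$. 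Your side remark that $\tilde\tau=f^*\tau$ annihilates the contracted directions could in principle compensate, but you do not make it quantitative, and neither the paper nor \cite{Gue} argues that way.

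The missing idea is a normalization that makes the curvature integrand bounded \emph{independently of the tensor}: the paper applies $dd^c$ to $\log(1+|\sigma|^2)$ rather than to $|\sigma|^2$ (where $\sigma=f^*\tau$). Wedging the resulting identity \eqref{ineq0} with $\omte^{n-1}$, integrating over $X$, and applying Cauchy--Schwarz yields inequality \eqref{ineq00}, in which every curvature term carries the weight $(1+|\sigma|^2)^{-1}$. Consequently $\frac{\la(\sharp\theta_\ep)\rr\sigma,\sigma\ra}{1+|\sigma|^2}$ is bounded by trace-type quantities in which $\sigma$ no longer appears; these are then killed by splitting $\theta_\ep$ into the pieces $\beta$ and $\gamma$, using Lemma~\ref{lem2} plus dominated convergence for $\gamma$, and a purely cohomological computation (orthogonality of $f^*[\alpha]$ and $E_i$, giving a contribution $t^{n-1}(E_i\cdot[\om_X]^{n-1})\to 0$) for $\beta$. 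The price of the logarithmic truncation is that one only concludes $\int_X \frac{|D'\sigma|^2}{(1+|\sigma|^2)^2}\wedge\omte^{n-1}\to 0$, but this still suffices: on compact subsets of $\mathrm{Amp}(f^*[\alpha])$ the tensor $\sigma$ is bounded and $\omte$ converges smoothly, so Fatou gives $D'\sigma=0$ there, exactly as in your final paragraph. Without this (or an equivalent) normalization, your argument does not close.
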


\begin{rema}\label{chern}
Since $c_1(T_Y)\,=\,0$ in $H^2(Y,\, \R)$, and $Y$ is in Fujiki class $\mathcal C$, 
from \cite[Theorem 1.5]{To} we know that the holomorphic line bundle $K_Y$ is of finite order. 
Therefore, there exists a finite unramified cover $\pi\,:\,\widehat{Y}\,\longrightarrow\, Y$ such that $K_{\widehat Y}$ is 
holomorphically trivial. 
\end{rema}

The previous remark leads to the following application of Theorem~\ref{Bochner principle}, valid for \textit{any} holomorphic tensors. 

\begin{coro}\label{determination}
In Setting~\ref{set}, let $\tau \,\in\, H^0(Y,\,{T_Y}^{\otimes p}\otimes {T_Y^*}^{\otimes q})$ with $p,q\, \geq \, 0$. Then, the restriction $\tau|_{\Om}$ to $\Om$ in \eqref{om} is parallel
with respect to the Ricci-flat K\"ahler metric $\om|_{\Om}$.

\noindent
In particular, the evaluation map 
$$\begin{matrix}
\mathrm{ev}_y: &H^0(Y, T_Y^{\otimes p} \otimes {T_Y^*}^{\otimes q})& \longrightarrow &
(T_Y^{\otimes p} \otimes {T_Y^*}^{\otimes q})_y \\
& \tau &\longmapsto & \tau(y)
\end{matrix}$$
is injective for all $y\,\in\, \Omega$ and all integers $p,\, q\, \geq \, 0$.
\end{coro}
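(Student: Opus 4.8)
The plan is to deduce Corollary~\ref{determination} from Theorem~\ref{Bochner principle} by a standard device that converts an arbitrary tensor of type $(p,q)$ into a tensor of type $(0, p+q)$ after trivializing the canonical bundle. First I would invoke Remark~\ref{chern}: since $c_1(T_Y)=0$ in $H^2(Y,\R)$ and $Y$ lies in Fujiki class $\mathcal C$, the line bundle $K_Y$ is of finite order, so there is a finite unramified cover $\pi\colon \widehat Y\longrightarrow Y$ with $K_{\widehat Y}\cong \O_{\widehat Y}$ holomorphically trivial. I would then pull back the whole situation to $\widehat Y$. Because $\pi$ is finite \'etale, the class $\pi^*[\alpha]\in H^{1,1}_{\d\db}(\widehat Y,\R)$ remains nef and big (nefness pulls back, and $\int_{\widehat Y}(\pi^*\alpha)^n=(\deg\pi)\int_Y\alpha^n>0$), and $c_1(T_{\widehat Y})=\pi^*c_1(T_Y)=0$; moreover the singular Ricci-flat metric on $\widehat Y$ associated to $\pi^*[\alpha]$ is exactly $\pi^*\om$, with smooth locus $\widehat\Om=\pi^{-1}(\Om)$, and $\pi^*\om|_{\widehat\Om}$ is the pullback K\"ahler metric, for which $\pi$ is a local isometry. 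Thus parallelism on $\widehat\Om$ is equivalent to parallelism on $\Om$, and it suffices to prove the claim upstairs where $K_{\widehat Y}$ is trivial.

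The key step is then the trivialization trick. Fix a nowhere-vanishing holomorphic section $e\in H^0(\widehat Y, K_{\widehat Y})$, i.e.\ a holomorphic $(n,0)$-form that is everywhere nonzero, giving an isomorphism $\O_{\widehat Y}\cong K_{\widehat Y}=\Lambda^n T_{\widehat Y}^*$. Given $\tau\in H^0(Y, T_Y^{\otimes p}\otimes {T_Y^*}^{\otimes q})$, I would consider its pullback $\pi^*\tau$ and convert each of the $p$ tangent factors into $n-1$ cotangent factors by contracting against $e$: using the nowhere-vanishing section $e$ and its dual, a holomorphic vector field locally corresponds to a holomorphic $(n-1)$-form contracted suitably, so one builds from $\pi^*\tau$ a genuine holomorphic section $\widetilde\tau\in H^0(\widehat Y, {T_{\widehat Y}^*}^{\otimes N})$ with $N=p(n-1)+q$, purely of cotangent type. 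Concretely, raising the tangent indices of $\pi^*\tau$ via the isomorphism $T_{\widehat Y}\cong \Lambda^{n-1}T_{\widehat Y}^*\otimes K_{\widehat Y}^{-1}\cong \Lambda^{n-1}T_{\widehat Y}^*$ (the last step using the trivialization $e$) produces such a $\widetilde\tau$, and this map is injective on global sections because $e$ is nowhere zero.

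Now Theorem~\ref{Bochner principle} applies directly to $\widetilde\tau$: its restriction to $\widehat\Om$ is parallel with respect to $\pi^*\om|_{\widehat\Om}$. I would then need to transfer parallelism of $\widetilde\tau$ back to parallelism of $\pi^*\tau$. The point is that $e$ itself is a holomorphic section of $K_{\widehat Y}$, hence by the same theorem $e|_{\widehat\Om}$ is parallel (it is a holomorphic tensor of type $(0,n)$, and up to the canonical antisymmetrization it lies in the hypothesis of Theorem~\ref{Bochner principle}); likewise the Ricci-flat metric $\pi^*\om$, being K\"ahler and Ricci-flat, has parallel induced volume form, so the trivializing section and all the raising/lowering isomorphisms built from $e$ and $\pi^*\om$ are themselves parallel on $\widehat\Om$. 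Since the conversion $\tau\mapsto\widetilde\tau$ is obtained by tensoring and contracting with parallel objects, parallelism of $\widetilde\tau$ forces parallelism of $\pi^*\tau$, and descending via the local isometry $\pi$ gives that $\tau|_\Om$ is parallel with respect to $\om|_\Om$, as claimed. The injectivity of $\mathrm{ev}_y$ for $y\in\Omega$ is then immediate: a parallel holomorphic tensor that vanishes at one point of a connected component of $\Om$ vanishes identically on that component, and since $\Om$ is a dense (in particular connected, after passing to the relevant component) Zariski open set and $\tau$ is holomorphic on all of $Y$, it vanishes on $Y$; hence $\tau\mapsto\tau(y)$ has trivial kernel.

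I expect the main obstacle to be the bookkeeping in the trivialization step: one must check carefully that the isomorphism $T_{\widehat Y}\cong\Lambda^{n-1}T_{\widehat Y}^*$ induced by $e$ is compatible with the Chern connection of $\pi^*\om|_{\widehat\Om}$, i.e.\ that $e$ and hence the contraction operators are genuinely parallel, not merely holomorphic. This hinges on the fact that on a Ricci-flat K\"ahler manifold the holonomy preserves the holomorphic volume form, so that a holomorphic trivializing $(n,0)$-form is automatically parallel --- which is exactly Theorem~\ref{Bochner principle} applied to $e$. Once that compatibility is secured, everything else is formal multilinear algebra with parallel tensors, and the descent through the finite \'etale cover $\pi$ is routine since $\pi$ is a local biholomorphic isometry.
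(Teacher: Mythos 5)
Your proof is correct and shares the paper's core strategy---pass to the finite \'etale cover $\pi\colon\widehat Y\longrightarrow Y$ on which $K_{\widehat Y}$ is trivial, convert mixed tensors into purely covariant ones through $T_{\widehat Y}\simeq \Lambda^{n-1}T^*_{\widehat Y}$, and invoke Theorem~\ref{Bochner principle}---but it reorganizes the key technical step. The paper never compares $\om$ with the singular Ricci-flat metric of $\pi^*[\alpha]$ directly on the Fujiki manifolds: it fixes a K\"ahler modification $f\colon X\longrightarrow Y$, forms the fiber product $\widehat X=X\times_Y\widehat Y$, and cites \cite[Proposition~3.5]{GGK} for the identity $T_{\widehat X}=\widehat\pi^*T$ along the finite map $\widehat\pi$ between the \emph{K\"ahler} models, then descends everything to $\Omega'=f(\mathrm{Amp}(f^*[\alpha]))$. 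You instead assert outright that the singular Ricci-flat metric in $\pi^*[\alpha]$ equals $\pi^*\om$ with smooth locus $\pi^{-1}(\Om)$; this is true, but it is not a formal consequence of the definitions---Proposition-Definition~\ref{metric} defines that metric via a K\"ahler modification of $\widehat Y$, so a rigorous verification (uniqueness from \cite{BEGZ} applied on $\widehat X$, or the same \cite{GGK} citation) essentially reconstructs the paper's Cartesian square; this is the one place where you owe a citation or an argument. On the other hand, your treatment of the return trip is more careful than the paper's: to pass from parallelism of the covariantized tensor $\widetilde\tau$ back to parallelism of $\pi^*\tau$, one needs the isomorphism $T_{\widehat Y}\simeq\Lambda^{n-1}T^*_{\widehat Y}$ to be parallel, i.e.\ the trivializing $(n,0)$-form $e$ to be parallel on the smooth locus; you obtain this by applying Theorem~\ref{Bochner principle} to $e$ itself, a point the paper passes over silently when it identifies $\widehat f^*\pi^*\tau$ with $\widehat\pi^*f^*\tau$. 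Your deduction of the injectivity of $\mathrm{ev}_y$ (parallel transport on the connected Zariski open set $\Om$ plus the identity theorem on $Y$) matches what the paper leaves to the reader.
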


\begin{proof}[Proof of Corollary~\ref{determination}]
Let $\tau$ be any holomorphic tensor on $Y$, not necessarily contravariant. Let $\pi\,:\,\widehat{Y} 
\,\longrightarrow\, Y$ be the finite unramified cover from Remark~\ref{chern}. As $\pi$ is a local 
biholomorphism, the pull-back $\pi^*\tau$ on $\widehat Y$ is well-defined. We can interpret $\pi^*\tau$ as a 
holomorphic contravariant tensor on $\widehat Y$. Indeed, a holomorphic trivialization of $K_{\widehat Y}$ 
produces a holomorphic isomorphism $T_{\widehat Y} \,\simeq\,\bigwedge^{n-1}T^{*}_{\widehat Y}$, where 
$n\,=\,\dim_{\mathbb C}Y$.

Now, let $f\,:\,X\,\longrightarrow\, Y$ be a K\"ahler modification, and let us set $T\,:=\,
f^*\om$ and $\Omega'\,:=\,f(\mathrm{Amp}(f^*[\alpha]))\,\subset\, \Omega$. As $\Omega'$ is dense in $\Omega$ for
the usual topology, and both $\tau$ and $\omega$ are smooth on $\Omega$, it suffices to prove that
$\tau|_{\Omega'}$ is parallel with respect to $\om|_{\Omega'}$. Since $f$ is an isomorphism over
$\Omega'$, we may pull back $\tau|_{\Omega'}$ by $f$ over this locus. This way we reduce the question to
proving that $f^*\tau|_{\Omega'}$ is parallel with respect to $T|_{f^{-1}(\Omega')}$.

Let $\widehat{X}\,:=\,X\times_{Y} \widehat{Y}$, so that we have a Cartesian square
 $$
  \xymatrix{
    \widehat{X} \ar[rr]^{\widehat f}  \ar[d]_{\widehat \pi} && \widehat{Y}  \ar[d]^{\pi} \\
      X \ar[rr]^{f}       && Y,
  }
  $$
The morphism $\widehat\pi$ is finite unramified (in particular, $\widehat X$ is compact K\"ahler) and $\widehat 
f$ is birational. By the observation at the beginning of the proof, we may apply Theorem~\ref{Bochner 
principle} to show that $\widehat{f}^*\pi^*\tau$ is parallel with respect to the singular Ricci-flat K\"ahler 
metric $T_{\widehat X}\,\in\, \widehat{f}^*\pi^*[\alpha]$ on the locus $\mathrm{Amp}(\widehat{f}^*\pi^*[\alpha])$. 
As $T_{\widehat X} \,\in\, \widehat{\pi} ^*f^*[\alpha]$, the functoriality
property of K\"ahler-Einstein metrics with 
respect to finite morphisms (see e.g. \cite[Proposition~3.5]{GGK}) shows that $T_{\widehat 
X}\,=\,\widehat{\pi}^* T$. Over $\Omega'$, the following identity holds $$\widehat{f}^*\pi^*\tau|_{\Omega'}\,=\, 
\widehat{\pi} ^*f^*\tau|_{\Omega'}\, .$$ Therefore, $\widehat{\pi} ^*f^*\tau|_{\Omega'}$ is parallel with respect to 
$\widehat{\pi}^* T|_{f^{-1}(\Omega')}$, hence $f^*\tau|_{\Omega'}$ is parallel with respect to 
$T|_{f^{-1}(\Omega')}$. The Corollary now follows easily.
\end{proof}

Let us now prove Theorem \ref{Bochner principle}. The arguments and computations in the proof are
extensively borrowed from \cite{Gue} (see also \cite{CP1}).

\begin{proof}[Proof of Theorem~\ref{Bochner principle}]
Let us fix a K\"ahler modification $f\,:\,X\,\longrightarrow\, Y$, and let us set $\Omega':=f(\mathrm{Amp}(f^*[\alpha])) \subset \Omega$. By the same arguments as in the proof of Corollary~\ref{determination}, it is sufficient to prove that $\tau|_{\Omega'}$ is parallel with respect to $\om|_{\Omega'}$ or, equivalently, that the restriction of $\sigma\,:=\,f^*\tau \,\in \, H^0(X, \,{T_X^*}^{\otimes p})$ to $f^{-1}(\Omega')$ is parallel with respect to $(f^*\om)|_{f^{-1}(\Omega')}$.

Let $\mathcal E\,:=\,{T_X^*}^{\otimes p}$, and let $h\,=\,|\cdotp \!|$ be the hermitian metric on
$\mathcal E$ induced by the K\"ahler metric $\omte$ introduced in Eq.~\eqref{regu}. Let $D\,=\,D'+\db$ be the
corresponding Chern connection for $(\mathcal E,\,h)$.The curvature of this Chern connection for $(\mathcal E,\,h)$
will be denoted by $\Theta_h( \mathcal E)$. The following holds:
\begin{equation}\label{ineq0} 
dd^c \log(\ud)\,=\,
\frac{1}{\ud}\left( |D'\sigma|^2-\frac{|\la D'\sigma,\sigma\ra |^2}{\ud}-\la
\Theta_h(\mathcal E)\sigma,\sigma\ra \right)\, .
\end{equation}
Wedging \eqref{ineq0} with $\omte^{n-1}$ and then integrating it on $X$ yields:
$$\int_X \frac{\la\Theta_h(\mathcal E)\sigma,\sigma\ra}{\ud} \wedge \omte^{n-1}
\,=\, \int_X \frac{1}{\ud}\left( |D'\sigma|^2-\frac{|\la D'\sigma,\sigma\ra |^2}{\ud} \right)
\wedge \omte^{n-1}\, .$$
Since $|\la D'\sigma,\sigma\ra | \,\le\, |D' \sigma| \cdotp |\sigma|$, we obtain the inequality
\begin{equation}
\label{ineq00}
\int_X \frac{\la\Theta_h(\mathcal E)\sigma,\sigma\ra}{\ud} \wedge \omte^{n-1}
\,\ge\,\int_X \frac{ |D'\sigma|^2}{(\ud)^2} \wedge \omte^{n-1}\, .
\end{equation}

\vspace{3mm}

First let us introduce a notation: if $V$ is a complex vector space of dimension $n$,
$1\le p\le n$ is an integer, and $t \,\in \,\mathrm{End}(V)$, then we denote by $t\rr$ the
endomorphism of $V^{\otimes p}$ defined by
\[t\rr(v_1 \otimes \cdots \otimes v_p) \,:=\,
\sum_{i=1}^p v_1 \otimes \cdots\otimes v_{i-1} \otimes t(v_i) \otimes v_{i+1} \otimes
\cdots \otimes v_p\, .\]
it may be noted that if $V$ has an hermitian structure, and if $t$ is hermitian semipositive, then
$t \rr$ is also hermitian semipositive for the hermitian structure on $V^{\otimes p}$ induced
by the hermitian structure on $V$; also, the inequality $\tr (t\rr) \,\le\, n^p \, \tr( t)$ holds. 

Now we can easily check the following identity:
\[n \Theta_h(\mathcal E ) \wedge \omte^{n-1} \,=\, -(\sharp \Ric \om)\rr \,\om^ n\, , \]
where $\sharp \Ric \om$ is the endomorphism of $T_X^*$ induced by $\Ric \omte$ via $\omte$. As 
$\Ric \omte\, =\,-\theta_{\ep}$, we deduce that 
$$\int_X \frac{\la\Theta_h(\mathcal E)\sigma,\sigma\ra}{\ud} \wedge \omte^{n-1}
\,=\,-\int_X \frac{\la(\sharp \theta_{\ep})\rr \sigma, \sigma \ra}{\ud} \om^n$$
(the operator $\sharp$ is defined above).

\vspace{3mm}
We can write $\theta\,= \,\sum a_i \theta_{i,\ep}$, where $\theta_{i,\ep}\,:=\,
a_i\left(\frac{\ep^2 |D's_i|^2}{(|s_i|^2+\ep^2)^2}+
\frac{\ep^2 \Theta_{h_i}(E_i)}{|s_i|^2+\ep^2}\right) $. In order to simplify the notation, we
drop the index $i$ and set 
$$\beta \,=\, \frac{\ep^2 |D's|^2}{(|s|^2+\ep^2)^2}\ \ \text{ and }\ \ \gamma \,=\,
\frac{\ep^2\Theta_{h_i}(E_i)}{|s|^2+\ep^2}\, ,$$ so that $\theta_{i,\ep}\,=\,a(\beta+\gamma)$; remember
that these forms are smooth as long as $\ep\,>\,0$.

Let us start with $\gamma$: there exists a
constant $C\,>\,0$ such that $$\pm \gamma \,\le\, C\ep^ 2/(|s|^2+\ep^ 2) \, \om_X\, .$$ As both
of the two operations $\sharp$ and $p$-th tensor power preserve positivity, we get that
$$\pm (\sharp \gamma)\rr \omte^n \,\le\, C\ep^ 2/(|s|^2+\ep^ 2) \, (\sharp \om_X)\rr \omte^n\, .$$
But $\sharp \om_X$ is a positive endomorphism whose trace is $\tr_{\omte}\om_X$, and therefore
we have $(\sharp \om_X)\rr \,\le\, n^p \tr_{\omte}(\om_X)\cdot \mathrm{Id}$. Consequently, 
\begin{eqnarray*}
\frac{\pm \la(\sharp \gamma)\rr \sigma,\sigma\ra}{\ud} \, \omte^n &\le& \frac{C\ep^2}{|s|^2+\ep^2} \cdot \frac{|\sigma|^2}{\ud}\, \om_X \wedge \omte^{n-1}\\
&\le & \frac{C\ep^2}{|s|^2+\ep^2} \, \om_X \wedge \omte^{n-1}
\end{eqnarray*}
for some $C\,>\,0$ which is independent of
$t$ and $\ep$. From Lemma \ref{lem2} below and using the dominated convergence theorem,
we deduce that the integral 
\[ \int_X \frac{\la(\sharp \gamma)\rr \sigma,\sigma\ra \, }{\ud}\omte^ n\]
converges to $0$ when $\ep$ goes to zero. \\

We now have to estimate the term involving $\beta$. We know that $\beta$ is non-negative, so 
$(\sharp \beta)\rr \omte^n \,\le\, n^{p+1}\, \beta \wedge \omte^ {n-1} \, \mathrm{Id}$, and hence 
\begin{eqnarray*}
0 \le \int_X \frac{\la(\sharp \beta)\rr \sigma,\sigma \ra }{\ud} \, \omte^n &\le &
C \int_X \frac{|\sigma|^2}{\ud} \cdot \beta \wedge \omte^{n-1} \\
& \le & C \int_{X} \beta \wedge \omte^{n-1} \\
&=& C\left(\int_{X} (\beta+\gamma) \wedge \omte^{n-1} -\int_{X} \gamma \wedge \omte^{n-1} \right)\\
&=& C \left( \{\theta\}\cdotp \{\om\}^{n-1}-\int_{X} \gamma \wedge \omte^{n-1} \right)\, .
\end{eqnarray*}
We already observed that the second integral converges to $0$ when $\ep \,\to\, 0$. As for the first 
term, it is cohomological (independent of $\ep$), equal to $t^{n-1}(E_i \cdotp [\om_X]^{n-1})$ 
since $f^*[\alpha]$ is orthogonal to $E_i$, and thus it converges to $0$ as $t$ goes to $0$. \\

Now recall \eqref{ineq00}:
$$\int_X \frac{\la\Theta_h(\mathcal E)\sigma,\sigma\ra}{\ud} \wedge \omte^{n-1} \,\ge\,
\int_X \frac{|D'\sigma|^2}{(\ud)^2} \wedge \omte^{n-1} \,\ge\, 0\, . $$
When $\ep$ and $t$ both go to $0$, $\om_{t,\ep}$ converges weakly to $f^*\om$. Moreover, $f^*\om$ is
smooth on $f^{-1}(\Omega')$ (defined at the beginning of the proof), and the convergence is smooth on the compact subsets of $\Om'$ by Theorem~\ref{convergence}.
Therefore, by Fatou lemma, we deduce that $D'\sigma\,=\,0$ on this locus, which was required
to be shown (here $D'$ denotes the Chern connection on $\mathcal E$
associated to $f^*\om$ on this open subset $\Omega'$).
\end{proof}

The proof of Theorem \ref{Bochner principle} involved the following result, which is proved
in much greater generality in \cite[Lemma~3.7]{Gue}. We provide a simpler proof more suited to the present
set-up. 

\begin{lemm}
\label{lem2}
For every fixed $t\,>\, 0$, and any section $s\, \in\, {\mathcal O}_X(E_i)$ for
some component $E_i$ of $E$, the integral
\[\int_{X} \frac{\ep^ 2}{|s|^2+\ep^ 2} \, \om_X \wedge \omte^{n-1}\]
converges to $0$ when $\ep$ goes to $0$. 
\end{lemm}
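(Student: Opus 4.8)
The plan is to exploit that fixing $t>0$ turns the relevant cohomology class into a Kähler class, which gives uniform control on the metrics $\omte$ as $\ep\to 0$. Write $E_i=\{s=0\}$ and set $\mu_\ep:=\om_X\wedge\omte^{n-1}$, a positive measure on $X$. Two elementary observations drive the argument. First, the integrand $\tfrac{\ep^2}{|s|^2+\ep^2}$ takes values in $[0,1]$ and tends to $0$ pointwise on $X\smallsetminus E_i$ as $\ep\to0$. Second, since $\omte$ lies in the fixed class $f^*[\alpha]+t[\om_X]$, the total mass
\[\mu_\ep(X)=\int_X \om_X\wedge\omte^{n-1}=[\om_X]\cdot\big(f^*[\alpha]+t[\om_X]\big)^{n-1}=:M_t\]
is a cohomological quantity, independent of $\ep$.

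First I would fix $\delta>0$ and split the integral over $\{|s|\ge\delta\}$ and $\{|s|<\delta\}$. On the first region the integrand is bounded by $\ep^2/\delta^2$, so that piece is at most $(\ep^2/\delta^2)\,M_t$, which tends to $0$ as $\ep\to0$ for each fixed $\delta$; here only the mass bound $M_t$ is used. The entire difficulty is thereby concentrated in the second region, where I can only bound the integrand by $1$. This reduces the statement to the no-concentration estimate $\limsup_{\ep\to0}\mu_\ep(\{|s|<\delta\})\to 0$ as $\delta\to 0$.

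To establish this I would use that for fixed $t>0$ the class $f^*[\alpha]+t[\om_X]$ is Kähler and the density of $\omte^n$ with respect to $\om_X^n$ is uniformly bounded in $\ep$ (it is comparable, up to a bounded smooth factor, to $\prod_j(|s_j|^2+\ep^2)^{a_j}$, which is uniformly bounded above). The BEGZ/Kolodziej estimates then provide a uniform bound on the potentials of $\omte$, so that, arguing as for Theorem~\ref{convergence}, the forms $\omte$ converge to a closed positive current $\om_t\in f^*[\alpha]+t[\om_X]$ with locally bounded potentials (and smoothly on $X\smallsetminus\mathrm{Supp}(E)$). By the Bedford--Taylor theory (continuity of the Monge--Amp\`ere operator under convergence in capacity of uniformly bounded potentials) this yields $\mu_\ep\rightharpoonup\mu_0:=\om_X\wedge\om_t^{n-1}$ weakly, where $\mu_0$ has total mass $M_t$ and, crucially, puts no mass on pluripolar sets; in particular $\mu_0(E_i)=0$, since the hypersurface $E_i$ is pluripolar. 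Because $\mu_\ep(X)=\mu_0(X)=M_t$, the portmanteau inequality for closed sets gives $\limsup_{\ep\to0}\mu_\ep(\{|s|\le\delta\})\le\mu_0(\{|s|\le\delta\})$, and letting $\delta\to0$ the right-hand side decreases to $\mu_0(E_i)=0$. Combining the two regions yields $\limsup_{\ep\to0}\int_X\tfrac{\ep^2}{|s|^2+\ep^2}\,\om_X\wedge\omte^{n-1}\le 0$, which is the claim.

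The main obstacle is exactly the no-concentration of mass near $E_i$: the bare total-mass bound is insufficient, because a priori the measures $\mu_\ep$ could charge the divisor in the limit. What rescues the argument is the special feature of the present set-up, namely that keeping $t>0$ fixed makes the class Kähler with a uniformly bounded Monge--Amp\`ere density; this is precisely where the proof is simpler than the general statement of \cite[Lemma~3.7]{Gue}. The remaining ingredients --- the splitting at $\{|s|=\delta\}$ and the fact that Monge--Amp\`ere measures of bounded potentials ignore pluripolar sets --- are routine.
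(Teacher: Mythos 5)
Your argument is correct in outline, but it takes a genuinely different and substantially heavier route than the paper's. The paper's proof is two lines: by Yau's a priori second-order (Laplacian) estimate \cite[p.~360, Proposition~2.1]{Ya}, for fixed $t>0$ there is a constant $C_t$ \emph{independent of} $\ep$ such that $\omte \le C_t\,\om_X$; hence the integrands $\tfrac{\ep^2}{|s|^2+\ep^2}\,\om_X\wedge\omte^{n-1}$ are dominated by the fixed integrable form $C_t^{n-1}\om_X^n$ and tend to $0$ pointwise off the Lebesgue-null set $E_i$, so dominated convergence concludes. You replace this pointwise domination of the metrics by a measure-theoretic scheme: cohomological mass bound, splitting at $\{|s|=\delta\}$, and a no-concentration estimate near $E_i$ obtained from weak convergence $\mu_\ep\rightharpoonup\mu_0=\om_X\wedge\om_t^{n-1}$, the fact that mixed Monge--Amp\`ere measures of bounded potentials charge no pluripolar set, and the portmanteau inequality (legitimate here since all $\mu_\ep$ have the same total mass). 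This works, but the one step you gloss over is why the potentials converge \emph{in capacity}, which is what your appeal to Bedford--Taylor continuity actually requires: a uniform $L^\infty$ bound plus weak convergence of the currents does not give this in general. It does follow here from Kolodziej's stability theorem, because the densities (comparable to $\prod_j(|s_j|^2+\ep^2)^{a_j}$, as you note) converge in every $L^p$, whence the potentials converge uniformly; with that made explicit your proof is complete. The trade-off between the two approaches: the paper exploits the fact that fixing $t>0$ keeps the class K\"ahler and the data uniformly controlled, so the classical $C^2$ estimate applies uniformly in $\ep$ and everything reduces to dominated convergence; your route avoids any second-order estimate, using only potential-theoretic ($C^0$ and Bedford--Taylor) tools, which makes it more robust and closer in spirit to the general \cite[Lemma~3.7]{Gue}, at the cost of invoking stability and capacity machinery that the present special situation lets one bypass.
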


\begin{proof}
By \cite[p.~360, Proposition~2.1]{Ya}, there is a constant $C_t>0$ independent of 
$\ep>0$ such that $\omte \,\le\, C_t \, \om_X$. The lemma thus follows from Lebesgue's dominated 
convergence theorem.
\end{proof}

\section{Geometric structures on manifolds in class $\mathcal C$}\label{geom struct}

In this section we give two applications of Theorem \ref{Bochner principle} for manifolds in Fujiki 
class $\mathcal C$ bearing a holomorphic geometric structure.

\subsection{Holomorphic geometric structures}

Let us first recall the definition of a holomorphic (rigid) geometric structure as given in 
\cite{DG, Gr}.

Let $Y$ be a complex manifold of complex dimension $n$ and $k \,\geq\, 1$ an integer. Denote by 
$R^k(Y)\, \longrightarrow \,Y$ the holomorphic principal bundle of $k$-frames of $Y$: it is the bundle of 
$k$-jets of local holomorphic coordinates on $Y$. Recall that the structure group of $R^k(Y)$ 
is the group $D^k$ of $k$-jets of local biholomorphisms of $\mathbb{C}^n$ fixing the origin. 
This $D^k$ is a complex algebraic group.

\begin{defi}\label{hgs}
A {\it holomorphic geometric structure} $\phi$ (of order $k$) on $Y$ is a holomorphic 
$D^k$--equivariant map from $R^k(Y)$ to a complex algebraic manifold $Z$ endowed with 
an algebraic action of the algebraic group $D^k$.
\end{defi}

A holomorphic geometric structure $\phi$ as in Definition \ref{hgs} is said to be of {\it affine
type} if $Z$ in Definition \ref{hgs} is a complex affine manifold.

Notice that holomorphic tensors are holomorphic geometric structures of affine type of order 
one. Holomorphic affine connections are holomorphic geometric structures of affine type of order 
two \cite{DG}. In contrast, while holomorphic foliations and holomorphic projective connections 
are holomorphic geometric structure in the sense of Definition \ref{hgs}, they are not of 
affine type.

A holomorphic tensor which is the complex analog of a Riemannian metric is defined in the 
following way:

\begin{defi}
A {\it holomorphic Riemannian metric} on a complex manifold $Y$ of complex
dimension $n$ is a holomorphic section
$$
g\, \in\, H^0(Y,\, \text{S}^2((T_Y)^*))\, ,
$$
where $\text{S}^i$ stands for the $i$-th symmetric product,
such that for every point $y\, \in\, Y$ the complex quadratic form $g(y)$ on $T_yY$
is of (maximal) rank $n$. 
\end{defi}

Take a holomorphic Riemannian manifold $(Y,\, g)$ as above.
The real part of $g$ is a pseudo-Riemannian metric $h$ of signature $(n,n)$ on the real manifold
of dimension $2n$ underlying the complex manifold $Y$.

As in the set-up of (pseudo-)Riemannian manifolds, there exists a unique torsion-free holomorphic connection 
$\nabla$ on the holomorphic tangent $T_Y$ such that $g$ is parallel with respect to $\nabla$. It 
is called the Levi-Civita connection for $g$.

Given $(Y,\, g)$ as above, consider the curvature of the holomorphic Levi-Civita connection 
$\nabla$ for $g$. This 
curvature tensor vanishes identically if and only if $g$ is locally isomorphic to the standard 
flat (complex Euclidean) model $(\mathbb C^n,\, dz_1^2 + \ldots + dz_n^2)$. In this flat case the real 
part $h$ of $g$ is also flat and it is locally isomorphic to $(\mathbb R^{2n},\, dx_1^2+ \ldots 
+dx_n^2-dy_1^2- \ldots- dy_n^2)$. For more details about the geometry of holomorphic Riemannian 
metrics the reader is referred to \cite{Du2, Gh2}.

A natural notion of (local) infinitesimal symmetry is the following (the
terminology comes from the standard Riemannian setting).

\begin{defi}
A (local) holomorphic vector field $\theta$ on $Y$ is a (local)
{\it Killing field} for a holomorphic geometric structure of order $k$
$$\phi \,:\, R^k(Y) \,\longrightarrow\, Z$$ if the flow for the canonical lift of
$\theta$ to $R^k(Y)$ preserves each of the fibers of the map $\phi$.
\end{defi}

Consequently, the (local) flow of a Killing vector field for $\phi$ preserves $\phi$. It is evident 
that the Killing vector fields for $\phi$ form a Lie algebra with respect to the operation of Lie 
bracket.

The holomorphic geometric structure $\phi$ is called {\it locally homogeneous} on an open subset 
$\Om$ of $Y$ if the holomorphic tangent bundle $T_Y$ is spanned by local Killing vector fields of 
$\phi$ in the neighborhood of every point in $\Om$. This implies that for any pair of points $o 
\, ,o' \,\in\, \Om$, there exists a (local) biholomorphism, from
a neighborhood of $o$ to a neighborhood of $o'$, that preserves $\phi$ and also sends $o$ to 
$o'$.

A holomorphic geometric structure $\phi$ is called {\it rigid} of order $l$ in the sense of 
Gromov, \cite{Gr}, if any local biholomorphism $f$ preserving $\phi$ is determined uniquely by
the $l$--jet of $f$ at any given point.

Holomorphic affine connections are rigid of order one in the sense of Gromov \cite{DG, Gr}. Their 
rigidity comes from the fact that local biholomorphisms fixing a point and preserving a 
connection actually linearize in exponential coordinates around the fixed point, so they are 
completely determined by their differential at the fixed point.

A holomorphic Riemannian metric $g$ is also a rigid holomorphic geometric structure, because 
local biholomorphisms preserving $g$ also preserve the associated Levi-Civita connection. In
contrast, holomorphic symplectic structures and holomorphic foliations are non-rigid geometric 
structures~\cite{DG, Gr}.

\subsection{A criterion for local homogeneity}

\begin{thm}\label{ouvert dense}
Let $Y$ be a compact complex manifold in Fujiki class 
$\mathcal C$ with trivial first Chern class ($c_1(T_Y)\,=\,0$ in $H^2(Y,\, \R)$). Assume that there 
exists a nef cohomology class $\lbrack \alpha \rbrack \,\in\, H^{1,1}(Y,\,\R)$ of positive 
self-intersection. Then the following two hold:
\begin{enumerate}
\item There exists a non-empty Zariski open subset $\Om\, \subset\, Y$ such that any holomorphic
geometric structure of affine type on $Y$ is locally homogeneous on $\Om$.

\item If $Y$ admits a rigid holomorphic geometric structure of affine type, then the 
fundamental group of $Y$ is infinite.
\end{enumerate} 
\end{thm}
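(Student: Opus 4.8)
The plan is to put ourselves in Setting~\ref{set} and then feed the Bochner principle of Corollary~\ref{determination} into Gromov's theory of rigid geometric structures, following the strategy of \cite{Du}. First note that the three hypotheses here — $c_1(T_Y)=0$, Fujiki class $\mathcal C$, and a nef class $[\alpha]$ with $\int_Y \alpha^n>0$ — are exactly those of Corollary~\ref{cor1}, so $[\alpha]$ is nef and big and Setting~\ref{set} applies. Let $\om$ be the singular Ricci-flat metric and let $\Omega$ be its smooth locus as in Proposition-Definition~\ref{om}; this is the $\Omega$ claimed in the statement, and it is independent of the geometric structure under consideration.

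For part (1), I would first convert the geometric structure into tensorial data. Given $\phi\,:\,R^k(Y)\,\longrightarrow\, Z$ of affine type, I equivariantly embed the affine $D^k$-variety $Z$ into a finite-dimensional $D^k$-module $W$; the components of this embedding exhibit $\phi$ as a holomorphic section of the natural bundle associated to $W$, a bundle whose graded pieces are tensor bundles $T_Y^{\otimes p}\otimes {T_Y^*}^{\otimes q}$ (every algebraic $D^k$-module being built from tensor representations). Since the embedding is a closed immersion, the fibres of $\phi$ coincide with the common level sets of the resulting holomorphic tensors, so a local holomorphic vector field is a Killing field for $\phi$ precisely when it preserves all of these tensors. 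By Corollary~\ref{determination}, each such tensor is parallel on $\Omega$ with respect to $\om$; equivalently, the evaluation maps $\mathrm{ev}_y$ are injective for every $y\in\Omega$.

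The decisive step is to pass from parallelism to local homogeneity, and this is where I expect the main difficulty to lie. Relative to $\om$-parallel frames, the parallelism of the defining tensors means that the jets of $\phi$ are constant along $\Omega$, so all points of $\Omega$ carry isomorphic infinitesimal models of $\phi$ to all orders. I would then invoke Gromov's integrability theorem for rigid holomorphic geometric structures \cite{Gr, DG}, in the form used in \cite{Du}, to upgrade this infinitesimal homogeneity to genuine local homogeneity, producing near each $y\in\Omega$ enough local holomorphic Killing fields of $\phi$ to span $T_yY$. The subtle point to verify is that it is the parallelism of the defining tensors — and not homogeneity of the metric $\om$ itself, which generically fails — that renders the jets of $\phi$ constant; the role of the Bochner principle is exactly to supply this, uniformly on all of $\Omega$ rather than on a smaller dense stratum.

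For part (2), suppose $\pi_1(Y)$ were finite and argue by contradiction. The universal cover $\widetilde Y$ is then a finite unramified cover, hence compact, simply connected, in Fujiki class $\mathcal C$, with $c_1=0$ and a nef and big class; it carries the pulled-back rigid affine-type structure $\widetilde\phi$, to which part (1) applies, so $\widetilde\phi$ is locally homogeneous on a dense open subset. Since the Ricci-flat metric is real-analytic and $\widetilde\phi$ is holomorphic, rigidity together with simple connectedness permits the local Killing fields to extend to global holomorphic Killing fields (the Nomizu--Amores--Gromov extension theorem for analytic rigid structures on simply connected manifolds), yielding a finite-dimensional Lie algebra $\mathfrak g$ of holomorphic vector fields whose evaluation spans $T_x\widetilde Y$ on a dense open set. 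The locus where $\mathfrak g$ fails to span is a proper analytic subset, hence empty, so $\widetilde Y$ is holomorphically homogeneous; a compact homogeneous manifold in Fujiki class $\mathcal C$ with $c_1=0$ is a complex torus, which cannot be simply connected when $\dim_{\mathbb C}\widetilde Y\ge 1$. This contradiction shows that $\pi_1(Y)$ is infinite.
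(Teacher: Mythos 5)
Your overall strategy matches the paper's: establish the tensor parallelism/vanishing property on $\Om$ via Corollary~\ref{determination}, feed it into Dumitrescu's machinery for affine-type structures to get (1), and for (2) pass to the universal cover and use the Nomizu--Amores--Gromov extension theorem to produce global Killing fields. In (1), however, there is a mismatch you should repair: statement (1) makes \emph{no} rigidity assumption, while the tool you invoke --- Gromov's integrability theorem, which promotes isomorphism of high-order jets to genuine local isometries --- is a theorem about \emph{rigid} structures. What the paper actually cites, Lemme~3.2 of \cite{Du}, covers all affine-type structures; its proof exploits the affine (tensorial) nature of the structure --- the obstructions to local homogeneity are themselves holomorphic sections of natural bundles filtered by tensor bundles, so the vanishing principle applies to them --- rather than rigidity. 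As written, your argument for (1) only treats the rigid case.

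The genuine gap is in (2). You assert that ``the locus where $\mathfrak{g}$ fails to span is a proper analytic subset, hence empty''; this is a non sequitur, as proper analytic subsets of a compact manifold are typically nonempty (think of the complement of the open orbit of $\mathrm{Aut}^0$ on a blown-up projective plane). Nor can the Bochner principle alone close the gap: a wedge $X_1\wedge\cdots\wedge X_n$ of Killing fields is a holomorphic section of $\bigwedge^n T_Y$, and Corollary~\ref{determination} controls it only on $\Om$ --- parallel there, hence nowhere zero on $\Om$ once nonzero at one point of $\Om$ --- but says nothing on $Y\smallsetminus\Om$. The paper bridges exactly this step using triviality of the canonical bundle: since the cover is simply connected and its canonical bundle has finite order (Remark~\ref{chern}), it is holomorphically trivial; then $vol(X_1,\dots,X_n)$ is a global holomorphic function, constant by the maximum principle and nonzero at a point of $\Om$, hence nowhere zero, so the fields span everywhere and the cover is parallelizable. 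Wang's theorem \cite{Wa} then exhibits it as a quotient of a complex Lie group by a cocompact lattice, contradicting simple connectedness. Your alternative endgame --- ``a compact homogeneous manifold in Fujiki class $\mathcal C$ with $c_1=0$ is a complex torus'' --- is itself an unproved claim (Borel--Remmert-type results are stated for K\"ahler manifolds), so even granting transitivity you would still owe an argument; the parallelizability-plus-Wang route avoids this entirely.
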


\begin{proof} (1): In view of Corollary \ref{cor1}, the assumptions of Theorem~\ref{Bochner principle} are satisfied by $Y$.
Corollary~\ref{determination} implies that there exists a Zariski open 
subset $\Om\,\subset\, Y$ such that any holomorphic tensor on $Y$ is parallel with respect to 
some K\"ahler metric on $\Om$. In particular, any holomorphic tensor on $Y$ vanishing at some 
point of $\Om$ must be identically zero. Then Lemme 3.2 in 
\cite[p.~565]{Du} gives that any holomorphic geometric structure of affine type on $Y$ is locally
homogeneous on $\Om$.

(2): To prove by contradiction, assume that the fundamental group of $Y$ is finite. Substituting the 
universal cover of $Y$ in place of $Y$, and considering the pull-back, to the universal cover, of the 
geometric structure on $Y$, we may assume $Y$ in the theorem to be simply connected.

Now $K_Y$ is 
trivial because $Y$ is simply connected; see Remark \ref{chern}. A result, first proved by Nomizu in 
the Riemannian setting \cite{No}, and subsequently generalized by Amores \cite{Am} and Gromov 
\cite{Gr}, gives the following: the condition that $Y$ is simply connected implies that any local 
(holomorphic) Killing field of a rigid holomorphic geometric structure on $Y$ extends to a global 
(holomorphic) Killing field (see also a nice exposition of it in \cite{DG}).

In particular, using 
statement (1) in the theorem, we obtain that at any point of $z\, \in\, \Om$, the fiber $T_z \Om$ of 
the holomorphic tangent bundle $T_Y$ is spanned by globally defined holomorphic vector fields on $Y$.
It was noted in the proof of (1) that any holomorphic tensor on $Y$ that vanishes at some
point of $\Om$ must be identically zero. Combining these we conclude that there are
$n$ global holomorphic vector fields on $Y$, where $n\,=\,\dim_{\mathbb C}Y$, that span $T_\Om$.

Fix $n$ global holomorphic vector fields $X_1,\, \cdots,\, 
X_n$ on $Y$ that span $T_\Om$. Also, fix a nontrivial holomorphic section $vol$ of the trivial
canonical bundle $K_Y$. Then $vol(X_1, \cdots, X_n)$ is a global holomorphic function on $Y$. 
This function must be constant (by the maximum principle) and nonzero at points in $\Om$. Since
$vol(X_1, \cdots, X_n)$ is nowhere vanishing on $Y$, 
it follows that $X_1,\, \cdots, \, X_n$ span the holomorphic tangent bundle $T_Y$ at all points of 
$Y$.

In other words, $Y$ is a parallelizable manifold. Hence by a theorem of Wang, \cite[p.~774,
Theorem 1]{Wa}, 
the complex manifold $Y$ must be biholomorphic to a quotient of a connected complex Lie group by 
a co-compact lattice in it. In particular, $Y$ is not simply connected. This gives the contradiction
that we are seeking.
\end{proof}

\subsection{A non-affine type example}

It should be mentioned that statement (1) in Theorem \ref{ouvert dense} is not valid in general 
for holomorphic geometric structures of non-affine type. To see such an example, first recall 
that Ghys constructed in \cite{Gh} codimension one holomorphic foliations on complex tori which 
are not translation invariant. Such a foliation can be obtained in the following way. Consider a 
complex torus $T\,=\,\C^{n} / \Lambda$, with $\Lambda$ a lattice in $\C^{n}$ and assume that 
there exists a linear form $\widetilde{\pi }\,:\,\C^{n} \,\longrightarrow\, \C$ sending $\Lambda$ to a 
lattice $\Lambda'$ in $\C$. Then $\widetilde{\pi}$ descends to a holomorphic fibration $\pi\,:\, T 
\,\longrightarrow\, \C / \Lambda'$ over the elliptic curve $\C / \Lambda'$. Choose a non-constant 
meromorphic function $u$ on the elliptic curve $\C/ \Lambda'$ and consider the meromorphic 
closed one-form $\Omega\,=\,\pi^{*}(udz) + \omega$ on $T$, where $\omega$ is any (translation invariant) 
holomorphic one-form on $T$ and $dz$ is a nontrivial holomorphic section of the canonical bundle of 
$\C / \Lambda'$. It is easy to see that the foliation given by the kernel of $\Omega$ extend on 
all of $T$ as a nonsingular codimension one holomorphic foliation $\mathcal F$. This foliation 
is not invariant by all translations in $T$, more precisely, it is invariant only
by those translations that are spanned by vectors 
lying in the kernel of $\widetilde{\pi}$. The subgroup of translations preserving $\mathcal F$ is a 
subtorus $T'$ of complex codimension one in $T$ \cite{Gh}.

On the other hand, since the holomorphic tangent bundle of $T$ is trivial, we have a family 
of global (commuting) holomorphic vector fields $X_1,\, X_2,\, \ldots,\, X_n$ on $T$ which 
span the holomorphic tangent bundle $T_T$ at any point in $T$.

The holomorphic geometric structure $\phi \,=\, (\mathcal{F},\, X_1,\, \ldots,\, X_n)$, obtained by 
juxtaposing Ghys' foliation $\mathcal F$ and the vector fields $X_i$ is a holomorphic rigid 
geometric structure of non-affine type \cite{DG,Gr}. Local Killing fields of $\phi$ commute 
with all $X_i$, so they are linear combinations of $X_i$; so they extend as globally defined 
holomorphic vector fields on $T$. Since the local Killing fields of $\phi$ are translations 
which must also preserve $\mathcal F$, they span the subtorus $T'$ of $T$. In particular, 
the Killing algebra of $\phi$ has orbits of complex codimension one in $T$ and therefore 
$\phi$ is not locally homogeneous on any nontrivial open subset of $T$.

On the contrary, for holomorphic geometric structure of affine type, we think that the 
non-empty Zariski open set $\Om$ in Theorem \ref{ouvert dense} is all of the manifold. This 
was proved to be true in \cite{BD} for Moishezon manifolds (these are manifolds 
bimeromorphic to some complex projective manifold \cite{Mo}).

\subsection{Holomorphic Riemannian metric}

\begin{thm}\label{holo metric}
Let $Y$ be a compact complex manifold in Fujiki class $\mathcal C$ admitting a holomorphic
Riemannian metric $g$. Assume that there exists a cohomology class $\lbrack \alpha \rbrack 
\,\in\, H^{1,1}_{\partial \overline\partial}(Y,\,\R)$ that is nef and
has positive self-intersection. Then there is a finite unramified cover $\gamma\, :\, {\mathcal T}\, 
\longrightarrow\, Y$, where $\mathcal T$ is a complex torus, such that the pulled back holomorphic Riemannian 
metric $\gamma^*g$ on the torus ${\mathcal T}$ is translation invariant.
\end{thm}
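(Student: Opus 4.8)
The plan is to leverage Theorem~\ref{ouvert dense}(1) together with the rigidity of the Levi-Civita connection to produce enough global Killing fields, and then argue that the holomorphic Riemannian metric forces these fields to be translations. First I would invoke Corollary~\ref{cor1} to see that $[\alpha]$ is nef and big, so all the hypotheses of Theorem~\ref{Bochner principle} and Corollary~\ref{determination} are met. Since a holomorphic Riemannian metric $g$ is a rigid holomorphic geometric structure of affine type (via its Levi-Civita connection $\nabla$), Theorem~\ref{ouvert dense}(1) gives a non-empty Zariski open $\Omega \subset Y$ on which $g$ is locally homogeneous. To remove the fundamental group obstruction in Theorem~\ref{ouvert dense}(2), I would pass to the universal cover: however, $Y$ need not be simply connected, so instead I would argue on a suitable finite cover. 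By Remark~\ref{chern} there is a finite unramified cover $\pi:\widehat{Y}\to Y$ with $K_{\widehat Y}$ trivial; pulling back $g$ and $\nabla$, the cover $\widehat Y$ inherits a holomorphic Riemannian metric with a rigid affine structure, and one still has the evaluation-injectivity from Corollary~\ref{determination} on the preimage of $\Omega$.

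The key step is then to show $\widehat{Y}$ (or a further finite cover) is a complex torus. Following the proof of Theorem~\ref{ouvert dense}(2), I would argue that on a simply connected cover the local Killing fields of $\nabla$ extend (by the Nomizu--Amores--Gromov extension theorem) to global holomorphic Killing vector fields, and local homogeneity on $\Omega$ forces $T_Y$ to be spanned at each point of $\Omega$ by such global fields. Using the trivialization $vol$ of $K_{\widehat Y}$ and the maximum principle exactly as in the proof of Theorem~\ref{ouvert dense}(2), the function $vol(X_1,\ldots,X_n)$ is a nonzero constant, so these $n$ global holomorphic vector fields span $T_{\widehat Y}$ everywhere, making $\widehat Y$ holomorphically parallelizable. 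By Wang's theorem, $\widehat Y$ is a quotient $G/\Gamma$ of a connected complex Lie group $G$ by a cocompact lattice.

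The remaining point, which is the heart of the holomorphic-Riemannian refinement, is to show that $G$ is abelian, i.e. that $\widehat{Y}$ is genuinely a complex torus rather than a more general parallelizable manifold. Here I would use that the global vector fields $X_1,\ldots,X_n$ are Killing for $g$ (after extension) and that the Levi-Civita connection is flat: local homogeneity of a torsion-free flat holomorphic connection together with a parallel nondegenerate symmetric form $g$ forces the Killing algebra to consist of parallel (hence, on $G/\Gamma$, translation-invariant) fields. Concretely, on a parallelizable manifold the Maurer--Cartan structure constants of $G$ are computed from the Lie brackets $[X_i,X_j]$; flatness of $\nabla$ and the fact that $\nabla$ is the Levi-Civita connection of the parallel metric $g$ force these brackets to vanish, so $G$ is abelian and $\widehat Y = G/\Gamma$ is a complex torus $\mathbb T$ on which $g$ pulls back to a translation-invariant holomorphic Riemannian metric.

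The hard part will be this last algebraic step: turning flatness of the Levi-Civita connection together with parallelism of $g$ into the commutativity of $G$. The subtlety is that a priori the global frame $X_1,\ldots,X_n$ spanning $T_{\widehat Y}$ is the left-invariant frame of $G$, and one must verify that $\nabla$ being torsion-free, flat, and metric for the translation-invariant $g$ is incompatible with nonabelian structure constants; equivalently, that a bi-invariant-type holomorphic Riemannian metric with \emph{flat} Levi-Civita connection on a complex parallelizable manifold forces the structure constants to vanish. I would carry this out by expressing $\nabla$ and its curvature in the global frame and showing that flatness plus the metric compatibility equations (the Koszul formula applied to the constant-coefficient metric $g$ in this frame) yield $[X_i,X_j]=0$, after possibly passing to a further finite cover to ensure $g$ itself has constant coefficients in the invariant frame.
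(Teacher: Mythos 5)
Your proposal has genuine gaps, and the most serious one is structural: the route through parallelizability and Wang's theorem cannot work. The Nomizu--Amores--Gromov extension theorem, which you invoke to globalize local Killing fields, requires the ambient manifold to be \emph{simply connected}. The finite cover $\widehat{Y}$ of Remark~\ref{chern} is compact but not simply connected; in fact, by Theorem~\ref{ouvert dense}(2) --- which applies here since $g$ is rigid of affine type --- the group $\pi_1(Y)$ is \emph{infinite}, so $\pi_1(\widehat Y)$ is infinite too, and the universal cover of $Y$ is non-compact. On that non-compact cover the maximum-principle argument for $vol(X_1,\ldots,X_n)$ and Wang's theorem are both unavailable (they need compactness). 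Note that the paper's proof of Theorem~\ref{ouvert dense}(2) is a proof \emph{by contradiction} under the hypothesis that $\pi_1$ is finite; it cannot be recycled constructively to conclude that any actual cover of $Y$ is parallelizable. A second gap: you never establish flatness of the Levi-Civita connection $\nabla^g$, you only assert it. Local homogeneity does not imply flatness. In the paper this is exactly the content of Lemma~\ref{lemc}: via Theorem~\ref{Bochner principle} (through Corollary~\ref{cor1}), $g$ is parallel with respect to the singular Ricci-flat K\"ahler metric $\om$ on $\Om$, and a de Rham splitting/holonomy argument shows $\om$ must be flat --- any irreducible factor would have holonomy preserving the positive/negative eigenspace decomposition of $\mathrm{Re}(g)$, a contradiction --- whence $\nabla^g$ is flat as well.

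Even granting parallelizability and flatness, your algebraic endgame is false as stated: a torsion-free flat connection preserving a nondegenerate invariant symmetric form does \emph{not} force abelian structure constants in the holomorphic Riemannian (i.e.\ complexified pseudo-Riemannian) setting. Nomizu's flat left-invariant Lorentzian metrics on the (non-abelian) Heisenberg group complexify to flat left-invariant holomorphic Riemannian metrics on the complex Heisenberg group, and the Iwasawa manifold is a compact quotient carrying such a metric while being very far from a torus; so the Koszul-formula computation you propose cannot yield $[X_i,X_j]=0$ without injecting the K\"ahler/Fujiki positivity somewhere. The paper's actual proof bypasses all of this: it never proves parallelizability. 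Instead, flatness of $\nabla^g$ (from Lemma~\ref{lemc}) holds on all of $Y$ since the curvature is holomorphic and vanishes on the Zariski-dense open set $\Om$; then the flat K\"ahler form $\om$, viewed as a $\nabla^g$-parallel section of $\Omega^{1,1}$ over $\Om$, extends to a parallel section $\widehat\om$ over all of $Y$ because $\pi_1(\Om)\to\pi_1(Y)$ is surjective (flat sections are monodromy invariants), and $\widehat\om$ stays nondegenerate since the induced map $T^{1,0}_Y\to\Omega^{0,1}_Y$ is connection-preserving and invertible on $\Om$. This exhibits $Y$ as a compact \emph{flat K\"ahler} manifold, and the classical Bieberbach-type theorem then produces the finite torus cover on which $g$ becomes translation invariant. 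If you want to salvage your approach, you would need to replace the Killing-field/Wang step by this extension-of-the-flat-metric argument, which is where the theorem's positivity hypotheses actually do their work.
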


\begin{proof}
Assume that $Y$ admits a holomorphic Riemannian metric $g$.
We have $T_Y\,=\, T^*_Y$, because $g$ gives a holomorphic isomorphism of $T^*_Y$ with $T_Y$.
This implies that the first Chern class of $T_Y$ vanishes. So Theorem \ref{Bochner principle}
holds for $Y$ because of Corollary \ref{cor1}. By Theorem \ref{Bochner principle}, there 
exists a non-empty Zariski open subset $\Om \, \subset\, Y$ endowed with a K\"ahler metric 
$\omega$, such that the restriction of the holomorphic tensor $g$ to $\Om$ is parallel with 
respect to the Levi-Civita connection on $T_{\Omega}$ for the K\"ahler metric $\omega$.

The following lemma proves that $g$ and $\omega$ are flat.

\begin{lemm}\label{lemc}
Let $U$ be an open subset of ${\C}^n$ in Euclidean topology, and let $\omega'$ be a K\"ahler metric on $U$. 
Suppose that there exists a holomorphic Riemannian metric $g'$ on $U$ such that the
tensor $g'$ is parallel with respect to the Levi-Civita connection for the
K\"ahler metric $\omega'$. Then the following three hold:
\begin{enumerate}
\item The K\"ahler metric $\omega'$ is flat.

\item The holomorphic Levi-Civita connection for $g'$ is flat.

\item The tensor $\omega'$ is flat with respect to the holomorphic Levi-Civita connection for $g'$.
\end{enumerate}
\end{lemm}

\begin{proof} Take any $u \,\in\, U$. Using de Rham's local splitting theorem, there exists a local 
decomposition of an open neighborhood $U^u\, \subset\, U$ of $u$ in ${\C}^n$ such that $(U^u,\, 
\omega')$ is a Riemannian product
\begin{equation}\label{e4}
(U^u,\, \omega')\,=\, (U_0,\, \omega_0) \times \ldots \times (U_p,\, \omega_p)\, ,
\end{equation}
where $(U_0, \,\omega_0)$ is a flat K\"ahler manifold and $(U_i,\, \omega_i)$
is an irreducible K\"ahler manifold for every $1\, \leq\, i\, \leq\, p$
(the reader is referred to \cite[Proposition 2.9]{GGK} 
for more details on this local K\"ahler decomposition).

For any $v\, \in\, U^u$, let $Q_v$ be the complex bilinear form associated to the quadratic form
$g'(v)$ on $T_v U^u$. Write $v\,=\, (v_0,\, v_1,\ldots, v_p)$ using \eqref{e4}. 
Since $g'$ is parallel with respect to $\omega'$, it follows that
for all $0\, \leq\, i,\, j\, \leq\, p$,
\begin{equation}\label{ev}
Q_v(w_i,\, w_j)\, =\, Q_v(h_i \cdot w_i,\, h_j \cdot w_j)
\end{equation}
for all $w_i \,\in\, T_{v_i} U_i$, $w_j \,\in\, T_{v_j} U_j$, for 
any $h_i\, \in\, \text{GL}(T_{v_i} U_i)$ in the holonomy group for $(U_i, \, \omega_i)$, and for any
$h_j\, \in\, \text{GL}(T_{v_j} U_j)$ in the holonomy
group for $(U_j, \, \omega_j)$. Assume that $i\, \not=\, j$. So at least one of $i$ and $j$
is different from zero. Assume that $j\, \not=\, 0$. From \eqref{ev} if follows
that
\begin{equation}\label{ev2}
Q_v(w_i,\, h_j \cdot w_j -h'_j \cdot w_j)\,=\, Q_v(w_i,\, w_j -w_j) \, =\,0
\end{equation}
for all $h_j,\, h'_j$ in the holonomy group for $(U_j,\, g_j)$
(set $h_i\,=\, \text{Id}$ in \eqref{ev}). Since $j\, >\, 0$, the holonomy 
group for $(U_j,\, g_j)$ is irreducible, which implies that the vector subspace of $T_{v_j} U_j$ 
generated by all elements of the form $h_j \cdot w_j -h'_j \cdot w_j$, where $w_j\, \in\, T_{v_j}U_j$ and $h_j,\, 
h'_j$ are elements of the holonomy group for $(U_j, \, \omega_j)$, is entire $T_{v_j} U_j$. Using 
this, from \eqref{ev2} it follows that $TU_i$ and $TU_j$ are $g'$--orthogonal for any $i \,\not=\, 
j$. Consequently, $g'$ is non-degenerate when restricted to all $(U_i,\, \omega_i)$, $0\, \leq\, i\, \leq\, p$.

To prove the first statement of the lemma
it suffices to show that $(U^u,\, \omega')\,=\, (U_0,\, \omega_0)$, meaning $p\,=\, 0$ in \eqref{e4}.

To prove $p\,=\, 0$ by contradiction, assume that $\omega'$ admits an (irreducible) factor 
$(U_1,\, \omega_1)$. The parallel transport for the Levi-Civita
connection for $\omega_1$ must preserve the 
restriction $g_1$ of $g'$ to $TU_1$ and also preserve the real part $h_1$ of $g_1$.
The real part $h_1$ of $g_1$ is a 
pseudo-Riemannian metric of signature $(n_1,\, n_1)$, where $n_1$ is the complex dimension of 
$U_1$. Consider the positive and the negative eigenspaces of
$h_1$ with respect to $\omega_1$. Since the parallel transport for the Levi-Civita
connection for $\omega_1$ preserves $g_1$,
the holonomy of $\omega_1$ preserves the positive and the negative eigenspaces of 
$h_1$ with respect to $\omega_1$. This is a contradiction, because the factor
$(U_1, \, \omega_1)$ is irreducible. This proves the first statement of the lemma.

To prove the second statement, since the K\"ahler metric $\omega'$ is flat, there exists local holomorphic
coordinates with respect to which $\omega'$ is the standard hermitian metric on ${\C}^n$. Take such a
holomorphic coordinate function on an open subset $U'\, \subset\, U$.
Therefore, on $U'$ parallel transports for the Levi-Civita 
connection for $\omega'$ are just translations in ${\C}^n$ in terms of this holomorphic coordinate function on
$U'$. Consequently, on $U'$ the holomorphic Riemannian metric 
$g'$ must be translation invariant (for the holomorphic coordinate function),
because $g'$ is invariant under the parallel transports for the 
Levi-Civita connection for $\omega'$. Hence the holomorphic Levi-Civita connection for $g'$ 
coincides with the standard affine connection on ${\C}^n$ in terms of the holomorphic coordinate function on 
$U'$. In particular, the holomorphic Levi-Civita connection for $g'$ is flat; this proves (2).

Since the holomorphic Levi-Civita connection for $g'$ coincides with the
standard affine connection on ${\C}^n$ in terms of a holomorphic coordinate function on $U'$
with respect to which $\omega'$ is the standard K\"ahler form on
${\C}^n$, it follows immediately that $\omega'$ is flat with respect to the holomorphic Levi-Civita
connection for $g'$. This completes the proof of the lemma.
\end{proof}

Continuing with the proof of Theorem \ref{holo metric}, let $\nabla^g$ be
the holomorphic Levi-Civita connection on $Y$ for the holomorphic Riemannian metric $g$. The
$C^\infty$ connection on $\Omega^{1,1}_Y\,=\, \Omega^{1,0}_Y\otimes \Omega^{0,1}_Y$ induced by
$\nabla^g$ is flat because $\nabla^g$ is flat by Lemma \ref{lemc}(2). Note that from Lemma \ref{lemc}(3)
it follows immediately that the section $\omega$ of
this flat bundle $\Omega^{1,1}_{\Om}$ is flat (covariant constant).

Since $\Om$ is a non-empty Zariski open subset of $Y$, the natural homomorphism
\begin{equation}\label{s1}
\pi_1(\Om, y_0) \, \longrightarrow\, \pi_1(Y,y_0)
\end{equation}
of fundamental groups is surjective, where $y_0\, \in\, \Om$. Using this
it can be deduced that the above flat section $\omega$ 
of $\Omega^{1,1}_{\Om}$ for the connection $\nabla^g$ extends to a flat section of $\Omega^{1,1}_Y$. 
Indeed, this follows immediately from the fact that the flat sections of a flat vector bundle are 
precisely the invariants of the monodromy representation. Note that for a $\pi_1(Y, y_0)$-module $V$, we 
have $V^{\pi_1(Y, y_0)}\, =\, V^{\pi_1(\Om, y_0)}$, because the homomorphism in \eqref{s1} is surjective. 
The flat section of $\Omega^{1,1}_Y$ (for the connection on it induced by $\nabla^g$) obtained by extending 
$\omega$ will be denoted by $\widehat{\omega}$.

Now consider the $\mathbb C$--linear homomorphism
$$
\widehat{\omega}'\, :\, T^{1,0}_Y\, \longrightarrow\, \Omega^{0,1}_Y
$$
given by $\widehat{\omega}$. It is connection preserving (for the connections
induced by $\nabla^g$), because the section $\omega$ is flat.
Since $\widehat{\omega}'$ is an isomorphism over $\Om$ (as $\omega$
is a K\"ahler form), and $\widehat{\omega}'$ is connection preserving,
it follows that $\widehat{\omega}'$ is an isomorphism over the entire $Y$.

Hence $\widehat{\omega}'$ defines a (nonsingular) hermitian structure on $Y$. This hermitian 
structure is K\"ahler because its restriction to $\Om$ is K\"ahler. This K\"ahler structure on $Y$ is 
flat because its restriction to $\Om$ is flat by Lemma \ref{lemc}(1). Therefore, $Y$ admits a finite 
unramified cover by a complex torus $\mathcal T$ such that the pull-back of $g$ to $\mathcal T$ is 
translation invariant \cite{Be}, \cite{Bo}.
\end{proof}



\begin{thebibliography}{ZZZZZ}

\bibitem[Am]{Am} A. M. Amores, Vector fields of a finite type $G$-structure, 
\textit{Jour. Diff. Geom.} \textbf{14} (1980), 1--6.

\bibitem[At]{At} M. F. Atiyah, Complex analytic connections in fibre
bundles, \textit{Trans. Amer. Math. Soc.} \textbf{85} (1957), 181--207.

\bibitem[Be]{Be} A. Beauville, Vari\'et\'es k\"ahleriennes dont la premi\`ere
classe de Chern est nulle, \textit{Jour. Diff. Geom.} \textbf{18} (1983), 755--782.

\bibitem[BD]{BD} I. Biswas and S. Dumitrescu, Fujiki class $\mathcal C$ and holomorphic 
geometric structures, arxiv:1805.11951, (2018).

\bibitem[BEGZ]{BEGZ} S. Boucksom, P. Eyssidieux, V. Guedj and A. Zeriahi, Monge-Amp\`ere 
equations in big cohomology class, \textit{Acta Math.} {\bf 205} (2010), 199--262.

\bibitem[Bog]{Bo} F. Bogomolov, On the decomposition of K\"ahler manifolds with trivial canonical 
class, \textit{Math. USSR Sbornik} {\bf 22} (1974), 580--583.

\bibitem[Bou1]{Bou02} S. Boucksom, On the volume of a line bundle, \textit{Int. Jour. Math.} 
{\bf 13} (2002), 1043--1063.

\bibitem[Bou2]{Bou04} S. Boucksom,
Divisorial Zariski decompositions on compact complex manifolds,
\textit{Ann. Sci. {\'E}cole Norm. Sup.} {\bf 37} (2004), 45--76. 

\bibitem[CP1] {CP1} F. Campana and M. P{\u{a}}un, 
Positivity properties of the bundle of logarithmic tensors on
compact K\"ahler manifolds, \textit{Compos. Math.} {\bf 152} (2016), 2350--2370.

\bibitem[CP2]{CP2} F. Campana and M. P\u{a}un, Foliations with positive slopes and 
birational stability of orbifold cotangent bundle, arxiv:1508.02456,
\textit{Inst. Hautes \'Etudes Sci. Publ. Math.} (to appear).

\bibitem[Cl]{Cl} B. Claudon, Positivit\'e du cotangent logarithmique et conjecture 
de Shafarevich-Viehweg (d'apr\`es Campana, Paun, Taji,$\cdots$), \textit{S\'eminaire 
Bourbaki}, \textbf{1105} (2015), 1--34.

\bibitem[CT]{CT} T. Collins and V. Tossati, K\"ahler currents and null loci, \textit{Invent. 
Math.} {\bf 202} (2015), 1167--1198.

\bibitem[DG]{DG} G. D'Ambra and M. Gromov, \textit{Lectures on transformations groups: geometry 
and dynamics}, Surveys in Differential Geometry, Cambridge MA, (1991).

\bibitem[De]{De} J.-P. Demailly, Regularization of closed positive currents and intersection 
theory, \textit{Jour. Alg. Geom.} {\bf 1} (1992), 361--409.

\bibitem[DP]{DP} J.-P. Demailly and M. P\u{a}un, Numerical characterization of the K\"ahler 
cone of a compact K\"ahler manifold, \textit{Ann. of Math.} {\bf 159} (2004), 1247--1274.

\bibitem[Du1]{Du} S. Dumitrescu, Structures g\'eom\'etriques holomorphes sur les 
vari\'et\'es complexes compactes, \textit{Ann. Scient. \'Ec. Norm. Sup.} \textbf{34}
(2001), 557--571.

\bibitem[Du2]{Du2} S. Dumitrescu, Homog\'en\'eit\'e locale pour les m\'etriques 
riemanniennes holomorphes en dimension $3$, \textit{Ann. Inst. Fourier} \textbf{57} 
(2007), 739--773.

\bibitem[Fu1]{Fu2} A. Fujiki, On automorphism group on K\"ahler manifolds, 
\textit{Invent. Math.} \textbf{44} (1978), 225--258.

\bibitem[Fu2]{Fu} A. Fujiki, On the structure of compact manifolds in $\mathcal C$, 
\textit{Advances Studies in Pure Mathematics}, \textbf{1}, Algebraic Varieties and 
Analytic Varieties, (1983), 231--302.

\bibitem[GGK] {GGK} D. Greb, H. Guenancia and S. Kebekus, Klt varieties with trivial canonical 
class: holonomy, differential forms, and fundamental groups, arxiv:1704.01408, (2017), to appear in Geom. Topol.

\bibitem[Gue]{Gue} H. Guenancia, Semistability of the tangent sheaf of singular varities, 
\textit{Algeb. Geom.} {\bf 3} (2016), 508--542.

\bibitem[Gh1]{Gh} E. Ghys, Feuilletages holomorphes de codimension un sur les espaces homog\`enes 
complexes, \textit{Ann. Fac. Sci. Math. Toulouse} \textbf{5} (1996), 493--519.

\bibitem[Gh2]{Gh2} E. Ghys, D\'eformations des structures complexes sur les espaces homog\`enes 
de $SL(2, \mathbb{C})$, \textit{J. Reine Angew. Math.} \textbf{468} (1995), 113--138.

\bibitem[Gr]{Gr} M. Gromov, Rigid transfomations groups, \textit{G\'eom\'etrie Diff\'erentielle},
Editors D. Bernard and Y. Choquet Bruhat, 65--139, Travaux en Cours, \textbf{33}, Hermann, (1988).

\bibitem[IKO]{IKO} M. Inoue, S. Kobayashi and T. Ochiai, Holomorphic affine connections on 
compact complex surfaces, \textit{J. Fac. Sci. Univ. Tokyo} \textbf{27} (1980), 247--264.
 
 \bibitem[Mo]{Mo} B. Moishezon, On $n$ dimensional compact varieties with $n$ independent 
meromorphic functions, \textit{Amer. Math. Soc. Transl.} \textbf{63} (1967), 51--77.

\bibitem[No]{No} K. Nomizu, On local and global existence of Killing vector fields, \textit{Ann. 
of Math.} \textbf{72} (1960), 105--120.

\bibitem[To]{To} V. Tosatti, Non-K\"ahler Calabi-Yau manifolds, \textit{Contemp. Math.}
\textbf{644} (2015), 261--277.

\bibitem[Va]{Va} J. Varouchas, K{\"a}hler spaces and proper open morphisms, 
\textit{Math. Ann.} \textbf{283} (1989), 13--52.

\bibitem[Wa]{Wa} H.-C. Wang, Complex Parallelisable manifolds, \textit{Proc. Amer. 
Math. Soc.} \textbf{5} (1954), 771--776.

\bibitem[Ya]{Ya} S.-T. Yau, On the Ricci curvature of a compact
K\"ahler manifold and the complex Monge--Amp\`ere equation.
I, {\it Comm. Pure Appl. Math.} \textbf{31} (1978), 339--411.

\end{thebibliography}
\end{document}